\numberwithin{equation}{section}
\numberwithin{figure}{section}
\def\expo_#1{{\rm e}^{#1}}
\def\H{{\mathcal H}}
\def\R{\mathbb R}
\def\Z{{\mathbb Z}}
\def\build#1_#2^#3{\mathrel{\mathop{\kern 0pt#1}\limits_{#2}^{#3}}}
\def\td_#1,#2{\mathrel{\mathop{\build\longrightarrow_{#1\rightarrow #2}^{}}}}
\newcommand{\ben}{\begin{equation}}
\newcommand{\een}{\end{equation}}
\newcommand{\beno}{\begin{eqnarray*}}
\newcommand{\eeno}{\end{eqnarray*}}
\newtheorem{theorem}{Theorem}
\newtheorem{proposition}{Proposition}
\newtheorem{lemma}{Lemma}
\newtheorem{remark}{Remark}
\date{January 2023}
\begin{document}
\title[Global Stein Theorem]{Global Stein Theorem on Hardy spaces}
\author{Aline Bonami\address{Aline Bonami, Institut Denis Poisson, D\'epartement de Math\'ematiques, Universit\'e d'Orl\'eans, 45067
Orl\'eans Cedex 2, France}, Sandrine Grellier\address{Sandrine Grellier, Institut Denis Poisson, D\'epartement de Math\'ematiques, Universit\'e d'Orl\'eans, 45067
Orl\'eans Cedex 2, France}
\and Benoit Sehba \address{Benoit Sehba, Department of Mathematics, University of Ghana, PO. Box LG 62 Legon, Accra, Ghana }}
\thanks{ The authors thank the referees for valuable suggestions and comments.}
\maketitle

\begin{abstract}
Let $f$ be an integrable function which has integral $0$ on $\R^n.$ What is the largest condition on $|f|$ that guarantees that $f$ is in the Hardy space $\mathcal H^1(\R^n)?$ When $f$ is compactly supported, it is well-known that the largest condition on $|f|$ is the fact that $|f|\in L \log L(\R^n).$  We consider the same kind of problem here, but without any condition on the support. We do so for $\mathcal H^1(\R^n)$, as well as for the Hardy space $\mathcal H_{\log}(\R^n)$ which appears in the study of pointwise products of functions in $\mathcal H^1(\R^n)$ and in its dual $BMO.$ 

\end{abstract}

\section{Introduction}
The aim of this article is to generalize a well known result by Stein concerning the maximal function. Let us recall it.

\noindent {\bf Theorem (Stein)\cite{stein} }   {\sl Assume that $f$ is a nonnegative integrable function on $\R^n$ that is compactly supported. Then  its maximal function $Mf$ is locally integrable if and only if } 
$$\int  f \ln_+ (f) dx<\infty.$$
{\sl Here, as usual, $\ln_+=\max(\ln,0)$ where $\ln$ denotes the Napierian logarithm. }\\

One may ask about a global version of such a theorem. Of course, it does not make sense without a modification, since the maximal function of such a nonzero function is bounded below by $\frac c{|x|^n}$ at $\infty.$ But it makes sense if the global maximal function is replaced by the local one, that is, if we consider
$$M^{loc}f(x)=\sup_{0<r<1}\frac 1{r^n}{\int}_{|y-x|<r}|f(y)|dy.$$

We will give a necessary and sufficient condition on a non negative $f$ which ensures the integrability of $M^{loc}f$ on $\R^n$, hence a necessary and sufficient condition for a nonnegative $f$ to belong to the local Hardy space $\mathfrak h^1(\R^n).$ 

A nonnegative integrable function cannot be in $\mathcal H^1(\R^n)$ since functions in $\mathcal H^1(\R^n)$ have mean $0.$ But one can ask whether $\displaystyle f-\left(\int f dy\right)\theta$ is in $\mathcal H^1(\R^n).$ Here $\theta$ is a fixed bounded function supported in the unit cube $Q:=(-\frac 12, \frac 12)^n,$ with integral $1.$ The characterization is simple and reminiscent of Stein Theorem.

\noindent {\bf Theorem. }    {\sl Assume $f$ is a nonnegative integrable function on $\R^n.$ Then $\displaystyle  f -\left(\int f dy\right)\theta$ is in $\mathcal H^1(\R^n)$ if and only if } 
\begin{equation}\label{log}
    \int_{\R^n} |f| \left(\ln_+ |f|+ \ln_+ (|x|)\right) dx<\infty.
\end{equation}
{\sl Moreover, condition \eqref{log} is sufficient, regardless of the condition on the sign of $f.$}\\ 
This answers the question raised in the abstract: \\

\noindent {\bf Corollary. }{\sl The vector space of  integrable functions that satisfy \eqref{log} is the largest space $S$ with the following property: if $g$ is in $S$ and $f$ is an integrable function such that $|f|\leq |g|$ and $\displaystyle \int_{\R^n}f dx=0,$   then  $f$ belongs to $\mathcal H^1(\R^n).$}

We will not come back to this corollary later on. So let us deduce it right now from the theorem. First, the theorem implies in particular that the space of functions satisfying condition \eqref{log} has this property. Let us prove that it is the largest. We choose $\theta$  nonnegative, bounded and  supported in $Q.$ We  know that $\theta$ is in $S$ since it satisfies \eqref{log}. Assume that $g$ is nonnegative and belongs to  $S.$ We want to prove that $g$ satisfies \eqref{log}. But, since $S$ is stable under addition (we assumed that it is a vector space),  
$\displaystyle g +\left|\int g dy\right|\theta$ is in $S.$ So the function $\displaystyle g -\left(\int g dy\right)\theta$ belongs to $\mathcal H^1(\R^n),$ and \eqref{log} holds for $g$. 
\smallskip

The second aspect of our work is the generalization to other Hardy spaces. Recall that one equivalent definition of $\mathcal H^1(\R^n)$ is given in terms of maximal functions $\mathcal M_\varphi,$ where $\varphi $ is a smooth function supported in the unit ball with nonzero integral. Following, for instance, \cite{st-book-Real-Variable}, we define, for $f$ an integrable function or, more generally, for $f$ a tempered distribution,
\begin{equation}
  \mathcal M_\varphi f(x)=\sup_{t>0} |\varphi_t*f(x)|, 
\end{equation}
where, as usual, for $t>0$ $\varphi_t=t^{-n}\varphi(t^{-1}\cdot).$
On the other hand, under adequate assumptions on the Musielak function $\Psi:\R^n\times [0,\infty)\to [0,\infty),$  we define the
Musielak-Orlicz-type space $L^\Psi(\R^n)$ as the set of all measurable functions $f$ such that 
$$\int_{\R^n}\Psi\left(x,\frac{|f(x)|}\lambda\right) dx<\infty$$ for some $\lambda>0.$ Typically the class of integrable functions that satisfy \eqref{log} is a Musielak space with $$\Psi (x, t):= t \left(1+\ln_+(t)+\ln_+ |x|\right).$$ This  is a  convex function in the variable $t,$ and $L^{\Psi}$ is a kind of Orlicz space with an Orlicz function that varies with the spatial point $x$. Observe that the dual space of this kind of Musielak spaces is well-known (see \cite{Mus}, \cite{MMO}). For the specific space under consideration, a function $g$ is in its dual  if,  for some $\lambda,$
$$\int\frac {e^{\lambda |g(x)|}}{(1+|x|)^{n+1}}dx<\infty.$$
One recognizes here an easy consequence of  the John-Nirenberg inequality for $BMO$ spaces, which can also be deduced from our results.

Other examples of spaces that are defined in terms of a Musielak functions are Musielak-Orlicz-type Hardy spaces $\mathcal H^\Psi(\R^n).$ These spaces  generalize Hardy spaces $\mathcal H^p(\R^n)$ for $p\leq 1$ (when $\Psi$ does not depend on the variable $x$, they are the spaces introduced by S. Janson \cite{Janson}).

The space $\mathcal H^\Psi(\R^n)$ is the space of tempered distributions $f$ such that $\mathcal M_\varphi f$ belongs to $L^\Psi(\R^n).$ We will consider here the space $\mathcal H_{\log}(\R^n),$ for which $$\Psi(x,t):=\frac{t}{1+\ln_+(t)+\ln_+ |x|}.$$
This space has been introduced in \cite{BGK} in relation with products of functions in $\mathcal H^1(\R^n)$ and $BMO$ and generalized div-curl lemmas. After the seminal paper of Ky \cite{Ky}, which followed this first paper, one can find a large literature on this type of spaces, which are in particular invariant through singular integrals. Here we will not appeal to the deep properties developed there, but use only elementary properties to stick to the generalization of Stein's theorem. We will see that, for $\mathcal H_{\log}(\R^n),$ the class of nonnegative functions for which one has the equivalent of the theorem above is defined by
\begin{equation}\label{log log}
    \int_B |f| \left(1+\ln_+(\ln_+ (|f|))+ \ln_+(\ln_+ (|x|))\right) dx<\infty.
\end{equation}

Note that an integrable function belonging to $\mathcal H_{\log}(\R^n)$ has mean $0.$ Indeed, since $f$ is in $L^1(\R^n)$, we know that $\varphi_t*f$ tends to the constant  $\displaystyle\int f dx$ for $t$ tending to $\infty.$ By  Lebesgue's dominated convergence Theorem, this constant function belongs to $L_{\log}(\R^n):=L^\Psi(\R^n)$, $\Psi$ defined above, which forces the constant to be $0.$ So, as for $\mathcal H^1(\R^n),$ it makes sense to subtract from $f$ the function $\displaystyle\left(\int f dx\right)\theta$ in order to characterize $\mathcal H_{\log}(\R^n).$\\

This problem has already been tackled in \cite{Sola} for $\mathcal H_{\log}(\R^n)$,  for functions with compact support. In other words, the authors of \cite{Sola} established the  analog of Stein's theorem. The interest we found in their paper motivates ours. Here, the new difficulty is the behavior at $\infty.$ A source of inspiration has also been for the first author the remembrance of discussions with François Bouchut in the nineties, and his manuscript \cite{Bouchut}, which already contained \eqref{log} as a sufficient condition.

\smallskip

\noindent {\bf Acknowledgement.} The authors thank François Bouchut for having shared his results with us.\\
\noindent{\bf Notations:} In the following, we will write $A\lesssim B$ (respectively $A\gtrsim B) $ whenever there exists a nonnegative constant $c$ with $A\le cB$ (respectively $A\ge cB$) and $A\simeq B$ whenever $A\lesssim B$ and $A\gtrsim B$. Each time the constants are assumed to be uniform constants, which depend only on the dimension of the space.

\section{The Hardy space $\mathcal H^{1}(\R^n)$ and its  local version $\mathfrak h^1(\R^n)$} 
Let us recall that $\mathcal H^{1}(\R^n)$ is the space of $L^1(\R^n)$ functions $f$ such that, for some smooth function $\varphi$ supported in the unit ball,
\begin{equation}
\|f\|_{\mathcal H^1}:=\|\mathcal M_\varphi(f)\|_1<\infty. 
\end{equation}
The local space $\mathfrak h^1(\R^n)$ is the space of $L^1$ functions such that 
\begin{equation}
\|f\|_{\mathfrak h^1}:=\|\mathcal M_\varphi^{{loc}}(f)\|_1<\infty.
\end{equation}

Here $\mathcal M_\varphi^{{loc}}$ is  the local maximal function, which is defined by taking the supremum on $t<1$ and not on all positive $t.$
\begin{equation}
  \mathcal M_\varphi^{loc} f(x)=\sup_{0<t<1} |\varphi_t*f(x)|, 
\end{equation}

\begin{remark}\label{nonnegative}
It is important to observe that $\mathcal M_\varphi^{loc} (f)\leq CM^{loc}f$ for all $f$. Moreover,  for a nonnegative function $f,$ the local maximal Hardy-Littlewood function $M^{loc}f$ is equivalent to $\mathcal M_\varphi^{loc}$.  \end{remark}
In the following, we will write $$Q:=\left(-\frac 12, \frac 12\right)^n,\, Q_k:=k+Q\text{ for } k\in\Z^n,\text{ and }f_k:=f\chi_{Q_k}.$$ 
Here $\chi_E$ is the characteristic function of the set $E$.\\
We first consider the case of the local Hardy space.   The following statement, which has its own interest, holds.
\begin{theorem}\label{bouchut-loc}  Let $f$ be an integrable function. Then $M^{loc}f$ belongs to $L^1(\R^n)$ if and only if $f=\sum f_k$ satisfies 
\begin{equation}\label{equ-loc}
   \sum_{k\in\Z^n}  \int |f_k(x)|\left(1+\ln_+\left(\frac{|f_k(x)|}{\|f_k\|_1}\right)\right)dx <\infty.
 \end{equation}
In particular, if $f$ satisfies condition \eqref{equ-loc} then $f$ belongs to $\mathfrak h^1(\mathbb R^n)$, the reverse being true if $f$ is nonnegative.
\end{theorem}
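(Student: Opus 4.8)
The plan is to reduce the global statement to a single-cube estimate of Stein type and then sum over the lattice $\Z^n$, exploiting that $M^{loc}$ is essentially a local operator. For $h\ge0$ write $N(h):=\int|h|\bigl(1+\ln_+(|h|/\|h\|_1)\bigr)\,dx$ and set $N_k:=N(f_k)$, so that condition \eqref{equ-loc} reads $\sum_k N_k<\infty$ (with $N_k:=0$ when $f_k\equiv0$). Let $3Q_k:=\bigcup_{|j-k|_\infty\le1}Q_j$ be the union of $Q_k$ and its $3^n-1$ neighbours. The engine of the proof is the quantitative, scale-invariant form of Stein's theorem on a single unit cube, which I would isolate as a lemma: for $h\ge0$ supported in $Q_k$,
$$N(h)\ \lesssim\ \int_{Q_k}Mh\,dx\ \le\ \int_{3Q_k}Mh\,dx\ \lesssim\ N(h),$$
with constants depending only on $n$. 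The upper bound is the classical Calderón--Zygmund computation (integrate the reverse weak-$(1,1)$ inequality $|\{Mh>\lambda\}|\lesssim\lambda^{-1}\int_{\{|h|>c\lambda\}}|h|$ in $\lambda$ from $\simeq\|h\|_1$ to $\infty$), while the left-hand lower bound is exactly the nontrivial ``only if'' half of Stein's theorem, made quantitative. The normalization by $\|h\|_1$ is forced by homogeneity: both $\int Mh$ and $N(h)$ are positively homogeneous of degree $1$ in $h$, which fixes the argument of $\ln_+$ up to the fixed factor $|Q_k|=1$.

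The first genuine observation is that $M^{loc}$ sees only a bounded neighbourhood: for $x\in Q_k$ and $0<r<1$ one has $B(x,r)\subset 3Q_k$, so $|f|=\sum_{|j-k|_\infty\le1}|f_j|$ on $B(x,r)$ and hence $M^{loc}f(x)\le\sum_{|j-k|_\infty\le1}M^{loc}f_j(x)\le\sum_{|j-k|_\infty\le1}Mf_j(x)$ for $x\in Q_k$. For the sufficiency direction I would integrate this over $Q_k$, bound each term by $\int_{Q_k}Mf_j\le\int_{3Q_j}Mf_j\lesssim N_j$ using the lemma (legitimate since $Q_k\subset3Q_j$ whenever $|j-k|_\infty\le1$), and sum over $k\in\Z^n$. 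Since each index $j$ occurs in at most $3^n$ neighbour sums, this gives $\int_{\R^n}M^{loc}f\,dx=\sum_k\int_{Q_k}M^{loc}f\lesssim\sum_j N_j$, so \eqref{equ-loc} implies $M^{loc}f\in L^1(\R^n)$.

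For the necessity direction I would argue cube by cube from below, using $M^{loc}f\ge M^{loc}f_k$ everywhere (as $|f|\ge|f_k|$). The point is to pass from $M^{loc}f_k$ to $Mf_k$ on $Q_k$ at negligible cost: writing $Mf_k=\max\bigl(M^{loc}f_k,\ \sup_{r\ge1}\tfrac1{r^n}\int_{B(\cdot,r)}|f_k|\bigr)$ and noting the second supremum is $\le\|f_k\|_1$ pointwise, one gets $\int_{Q_k}Mf_k\le\int_{Q_k}M^{loc}f_k+\|f_k\|_1$. Combining the lower bound of the lemma with the trivial estimate $\int_{Q_k}M^{loc}f_k\ge\int_{Q_k}|f_k|=\|f_k\|_1$ (Lebesgue differentiation) yields $\int_{Q_k}M^{loc}f_k\gtrsim\max\bigl(\|f_k\|_1,\ \int|f_k|\ln_+\tfrac{|f_k|}{\|f_k\|_1}\bigr)\gtrsim N_k$, using $\max(a,b)\ge\tfrac12(a+b)$ and $N_k=\|f_k\|_1+\int|f_k|\ln_+(|f_k|/\|f_k\|_1)$. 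Summing over $k$ gives $\sum_k N_k\lesssim\int_{\R^n}M^{loc}f$, so $M^{loc}f\in L^1(\R^n)$ forces \eqref{equ-loc}. Both directions involve only $|f|$, so the equivalence $M^{loc}f\in L^1(\R^n)\iff\eqref{equ-loc}$ holds for every integrable $f$.

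Finally, the $\mathfrak h^1$ assertions follow from Remark~\ref{nonnegative}: since $\mathcal M_\varphi^{loc}f\le CM^{loc}f$ always, \eqref{equ-loc} forces $f\in\mathfrak h^1(\R^n)$ for arbitrary $f$; and when $f\ge0$ the equivalence $M^{loc}f\simeq\mathcal M_\varphi^{loc}f$ turns $f\in\mathfrak h^1(\R^n)$ into $M^{loc}f\in L^1(\R^n)$, hence into \eqref{equ-loc}. I expect the main obstacle to be the single-cube lemma, and specifically its lower bound, i.e.\ the quantitative, scale-invariant version of the delicate half of Stein's theorem; the localization and the summation are then routine bookkeeping, the only care being the finite-overlap count $3^n$ and the homogeneity normalization by $\|f_k\|_1$.
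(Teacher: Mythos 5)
Your proposal is correct and follows essentially the same route as the paper: decompose $f$ over the unit cubes $Q_k$, apply a two-sided quantitative Stein estimate on each cube (upper bound from the weak $(1,1)$ inequality, lower bound from the reverse weak-type inequality), and sum using the locality and finite overlap of $M^{loc}$. The only real difference is in the necessity step, where the paper proves the lower bound directly for $M^{loc}$ by comparing it with the dyadic maximal function restricted to cubes of side $\le n^{-1/2}$, whereas you invoke the classical lower bound for the full $M$ and reduce to $M^{loc}$ via the pointwise bound $Mf_k\le M^{loc}f_k+c_n\|f_k\|_1$ --- a clean shortcut; the single-cube lower bound you flag as the main obstacle is precisely what the paper writes out via \eqref{SteinIneq0}, so citing it is legitimate and your normalization by $\|f_k\|_1$ by homogeneity is the right one.
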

One would prefer to have global integrals, but unfortunately this is not the case. Observe that the condition
\begin{equation}\label{ln}
 \int_{\R^n} |f|(1+\ln_+|f|)\,dx <\infty 
\end{equation}
is necessary.
We get a sufficient condition when we add the following one
\begin{equation}\label{amalgam}
\sum_k \|f_k\|_1 \ln_+(\|f_k\|_1^{-1})<\infty.
\end{equation}
This last condition may be interpreted as an amalgam condition: the sequence $(\|f_k\|_1)$ belongs to a kind of $\ell \log \ell$ space of sequences.

\smallskip
\begin{proof}[Proof of the sufficient condition]
 By subadditivity of the maximal function,
$$\|M^{loc}(f)\|_1\leq \sum_k
\|M^{loc}(f_k)\|_1.$$
We will consider each term separately. We first observe that $M^{loc}(f_k)$ has support in $k+2Q$, and, on this cube,
$$M^{loc}(f_k)\leq M(f_k).$$ 
The classical $L\log L$ inequality (see \cite{stein} for instance) written for the normalized function $\frac{f_k}{\|f_k\|_1}$ gives 
 \begin{equation}\label{llogl}
     \int_{k+2Q} Mf_k \,dx \lesssim\|f_k\|_1+\int |f_k|\ln_+\left (\frac{f_k}{\|f_k\|_1}\right) dx.
 \end{equation} 
 We conclude at once.
 
 \end{proof}
   \begin{proof}[Proof of the necessary condition]
As for the sufficient condition, we first prove an estimate for each $f_k.$ We claim that the following estimate holds for $v$ supported in $Q$, 
    \begin{equation}\label{Mloc}
  \int_Q M^{ {loc}} v \,dx\gtrsim \int_Q |v|\ln_+(|v|)\,dx.
  \end{equation}
   The proof is a variant of the proof of \cite{stein} for the local maximal function. We write it for the sake of completeness. Consider the family $\mathcal Q$ of dyadic sub-cubes of $Q.$ The maximal dyadic function is defined on $Q$ by
  \begin{equation}\label{nec-loc}
      M^{d}v(x):=\sup_{x\in R, R\in \mathcal Q}\frac 1{|R|}\int_ R|v(y)|\,dy.
  \end{equation}
  The following classical estimate is the key of the proof. For $v$ with norm $1,$ and $s>1,$
  \begin{equation}\label{SteinIneq0}|\{x, M^d(v)(x)> s\}|\geq \frac 1s \int_{|v|>s} |v|dx
  \end{equation}
  
  Let $\ell(R)$ be the length of the sides of the cube $R.$ The following lemma compares the two maximal functions.
  \begin{lemma}
  Let $v$ be an integrable function supported in $Q$ with norm $1.$ For $x\in Q,$   
   \begin{equation}\label{in-M}
       M^{loc}v(x)\ge c\sup\frac 1{|R|}\int_ R|v(y)|\,dy,
   \end{equation}
   where the supremum is taken on dyadic cubes $R$ containing $x$ and such that $\ell(R)\sqrt n\leq 1.$ Here $c=n^{-n/2}|B(0, 1)|^{-1}.$ 
  \end{lemma}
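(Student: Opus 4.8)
The plan is to compare, for each admissible dyadic cube $R$ separately, its average of $|v|$ with a single ball average that is legitimately counted by $M^{loc}v(x)$, and then to take the supremum over $R$. Since the right-hand side of \eqref{in-M} is itself a supremum, it suffices to exhibit, for every dyadic cube $R$ with $x\in R$ and $\ell(R)\sqrt n\le 1$, an admissible radius $r\in(0,1)$ for which
$$\frac 1{|R|}\int_R|v(y)|\,dy\le \frac 1c\,M^{loc}v(x).$$

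The geometric heart of the argument is the choice $r:=\ell(R)\sqrt n$, which is exactly the diameter of $R$. By hypothesis $r\le 1$, so $r$ is admissible in the definition of $M^{loc}$. Since $x\in R$, every point $y\in R$ satisfies $|y-x|\le\mathrm{diam}(R)=r$; hence $R\subseteq \overline{B(x,r)}$, and up to its boundary (a set of measure zero) $R\subseteq B(x,r)$. Consequently
$$\int_R|v(y)|\,dy\le \int_{B(x,r)}|v(y)|\,dy\le |B(x,r)|\,M^{loc}v(x),$$
where the last inequality is just the defining bound for the local Hardy--Littlewood maximal function evaluated at the admissible radius $r$.

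It then remains only to compute the volume ratio. Using $|B(x,r)|=|B(0,1)|\,r^n=|B(0,1)|\,n^{n/2}\ell(R)^n$ together with $|R|=\ell(R)^n$, one gets $\frac 1{|R|}\int_R|v|\,dy\le |B(0,1)|\,n^{n/2}M^{loc}v(x)$, which is precisely the asserted inequality with $c=n^{-n/2}|B(0,1)|^{-1}$; taking the supremum over all admissible $R$ finishes the proof. The argument is elementary and I expect no genuine obstacle: the only point requiring care is that $x$ may lie at a corner of $R$ rather than at its center, which is exactly why the radius must be taken as large as the full diameter $\ell(R)\sqrt n$ (rather than $\ell(R)/2$), and the accompanying open/closed-ball ambiguity is harmless since it concerns a null set. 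The normalization $\|v\|_1=1$ plays no role, the inequality being homogeneous in $v$.
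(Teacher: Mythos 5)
Your proof is correct and follows essentially the same route as the paper: both arguments enclose the dyadic cube $R$ in the ball $B(x,\ell(R)\sqrt n)$ centered at $x$ with radius equal to the diameter of $R$, note that this radius is at most $1$ by hypothesis, and compare the two volumes to obtain the constant $c=n^{-n/2}|B(0,1)|^{-1}$. Your added remarks (why the radius must be the full diameter, the harmlessness of the boundary, and the irrelevance of the normalization $\|v\|_1=1$) are accurate but not needed.
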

   \begin{proof}
    Each dyadic cube $R$ is contained in the ball $B(x,\ell(R)\sqrt n),$
   which has the volume $n^{n/2}|B(0, 1)||R|.$
   So
   $$\frac 1{|B(x,\ell(R)\sqrt n)|}\int_{B(x,\ell(R)\sqrt n)}|v|dx>c\frac 1{|R|}\int_R|v|dx.$$
   This ball has radius bounded by $1$ when $\ell(R)\sqrt n\leq 1.$ The inequality between suprema follows at once.
   \end{proof}
   We claim that the supremum defined in the lemma coincides with $M^dv(x)$ when $M^d v(x)>\sqrt n^n.$ Indeed, for larger dyadic cubes,
   we have
   $$\frac 1{|R|}\int_R|v(y)|dy\leq \sqrt n^n\int_Q|v|dy=\sqrt n^n.$$
   So, using  the inequality \eqref{SteinIneq0} for the dyadic maximal function, we get
   \begin{equation}\label{SteinIneq}
       |\{x\in Q, M^{loc}(v)(x)> s\}|\gtrsim \frac 1s \int_{|v|>s/c} |v|dx
   \end{equation} for $s>c\sqrt n^n.$ We integrate both sides from $c\sqrt n^n$ to $\infty$, and find $$\int_{Q}M^{loc}v \,dx \gtrsim\int|v|\ln_+ \left(\frac{|v|}{\sqrt n^n}\right) dx.  $$
   Since the norm of $v$ is $1,$ the estimate  \eqref{Mloc} on $|v| \ln_+(|v|)$ follows at once, using the fact that $\ln_+(t)\leq \ln_+ \left(\frac{t}{\sqrt n^n}\right)+\ln_+(\sqrt n^n).$

 To conclude for the proof, we observe that $M^{loc} f \geq M^{loc} f_k$ on $Q_k.$ Hence, applying the inequality \eqref{Mloc} to each $f_k$ and summing on $k$ allows to get the necessary condition. 
\end{proof}

We now turn to global Hardy space $\mathcal H^1(\R^n)$. 
For $f\in L^1(\R^n),$ let us define 
\begin{equation}\label{cancellation}
    T_\theta f:=f-\left(\int f \,dx\right) \theta.
\end{equation}
Here, $\theta$ is a fixed bounded function supported in the unit cube with $$\displaystyle
\int \theta dx=1.$$

The following theorem may be seen as a global Stein's theorem (see \cite{stein}) and gives as well the necessary condition.
\begin{theorem}\label{bouchut} Let $T_\theta$ be defined as in \eqref{cancellation}. Let $f$ be an integrable function. Then $T_\theta f$ is a function of $\mathcal H^1(\mathbb R^n)$ if  \begin{equation}\label{bouchut-cond}
    \int |f(x)|\left(1+\ln_+|f(x)|+\ln_+|x|\right)dx <\infty.
\end{equation} Moreover, if $f$ is nonnegative 
and $T_\theta f$ is in $\mathcal H^1(\mathbb R^n),$ then condition \eqref{bouchut-cond} holds.
\end{theorem}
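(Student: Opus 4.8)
The plan is to treat the two implications separately, reducing everything to the cube decomposition $f=\sum_k f_k$. Throughout I write $c_k:=\int f_k\,dx$ and let $\theta_k:=\theta(\cdot-k)$ be the copy of $\theta$ supported in $Q_k$. The starting point for sufficiency is the algebraic identity
\begin{equation*}
T_\theta f=\sum_{k\in\Z^n} b_k+\sum_{k\in\Z^n} d_k,\qquad b_k:=f_k-c_k\theta_k,\quad d_k:=c_k(\theta_k-\theta),
\end{equation*}
in which each $b_k$ is supported in $Q_k$ and has integral $0$, while each $d_k$ is a \emph{dipole} joining the two unit cubes $Q$ and $Q_k$. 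I would then bound $\|T_\theta f\|_{\mathcal H^1}$ by $\sum_k\|b_k\|_{\mathcal H^1}+\sum_k\|d_k\|_{\mathcal H^1}$ and estimate the two families by different mechanisms.

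For the local pieces $b_k$, which are mean zero and supported in a fixed-size cube, the $\mathcal H^1$ and $\mathfrak h^1$ norms are comparable, so the sufficiency part of Theorem~\ref{bouchut-loc} (the $L\log L$ estimate \eqref{llogl}) yields $\|b_k\|_{\mathcal H^1}\lesssim \int|f_k|\big(1+\ln_+(|f_k|/\|f_k\|_1)\big)\,dx$. Summing and expanding $\ln_+(|f_k|/\|f_k\|_1)\le \ln_+|f_k|+\ln_+(\|f_k\|_1^{-1})$ reduces the total to \eqref{ln} plus the amalgam sum \eqref{amalgam}. A short combinatorial lemma then shows \eqref{bouchut-cond}$\Rightarrow$\eqref{amalgam}: splitting the cubes according to whether $\|f_k\|_1\ge(1+|k|)^{-n-1}$ or not, the first group is controlled by $\sum_k\|f_k\|_1\ln_+|k|\lesssim\int|f|\ln_+|x|\,dx$, and the second by the convergent series $\sum_k(1+|k|)^{-n-1}\ln(2+|k|)$ (using that $t\mapsto t\ln_+(1/t)$ is increasing near $0$). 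For the dipoles I would invoke the standard dyadic-chain estimate $\|\theta_k-\theta\|_{\mathcal H^1}\lesssim\ln(2+|k|)$, obtained by telescoping through bumps of doubling width from $Q$ to $Q_k$, each increment being a single atom; hence $\sum_k\|d_k\|_{\mathcal H^1}\lesssim\sum_k\|f_k\|_1\ln(2+|k|)\lesssim\int|f|\,(1+\ln_+|x|)\,dx$. Both families are therefore dominated by \eqref{bouchut-cond}.

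For the necessity assume $f\ge0$ and $T_\theta f\in\mathcal H^1$. Since $\mathcal M_\varphi^{loc}\le\mathcal M_\varphi$ and $f=T_\theta f+c\,\theta$ with $c=\int f$, one gets $f\in\mathfrak h^1$ (the correction $\mathcal M_\varphi^{loc}\theta$ being integrable), so the necessary condition of Theorem~\ref{bouchut-loc} gives \eqref{equ-loc}, whence \eqref{ln} after the crude bound $\sum_k\|f_k\|_1\ln_+\|f_k\|_1\le\|f\|_1\ln_+\|f\|_1$. It remains to produce the growth condition $\int f\,\ln_+|x|\,dx<\infty$, which is invisible to the local theorem. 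Here I would use $\mathcal H^1$--$BMO$ duality against the truncated logarithm $b_R:=\min(\ln_+|x|,R)$, which lies in $BMO$ with $\|b_R\|_{BMO}\lesssim1$ uniformly in $R$. Since $b_R$ is bounded and $T_\theta f\in L^1$, the pairing converges absolutely and
\begin{equation*}
\int f\,b_R\,dx=\int (T_\theta f)\,b_R\,dx+c\int\theta\,b_R\,dx\lesssim \|T_\theta f\|_{\mathcal H^1}+c\,\|\theta\|_1\ln_+(\sqrt n),
\end{equation*}
a bound independent of $R$. Letting $R\to\infty$ and using $f\ge0$, monotone convergence gives $\int f\,\ln_+|x|\,dx<\infty$.

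The main obstacle is precisely this last step. The delicate point is that one cannot pair $T_\theta f$ directly with $\ln_+|x|$: a priori $\int|f|\ln_+|x|\,dx$ is exactly the quantity whose finiteness is in question, so the integral defining the pairing need not converge absolutely. The truncation $b_R$, together with the uniform $BMO$ bound and monotone convergence, is what breaks this apparent circularity, and it is here that the sign assumption $f\ge0$ is essential. On the sufficiency side the only nonroutine input is the logarithmic dipole estimate $\|\theta_k-\theta\|_{\mathcal H^1}\lesssim\ln(2+|k|)$, which is what converts the spatial tail of $f$ into the weight $\ln_+|x|$; the matching lower bound for a genuine dipole also explains why $\ln_+|x|$, and not a slower weight, is the correct one.
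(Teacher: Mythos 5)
Your proof is correct, and the sufficiency half follows essentially the same route as the paper: the same splitting of $T_\theta f$ into mean-zero local pieces plus dipoles (the paper uses $\chi_{Q_k}$ where you use $\theta_k$, an immaterial difference), the same $L\log L$ estimate for the local pieces, the same division of the amalgam sum according to whether $\|f_k\|_1\gtrsim(1+|k|)^{-n-1}$, and the same logarithmic bound $\|\chi_{Q_j}-\theta\|_{\mathcal H^1}\lesssim 1+\ln|j|$ for the dipoles --- which the paper proves by a direct estimate of $\mathcal M_\varphi$ (Lemma \ref{far-lem}) rather than by your telescoping chain of atoms; both arguments are standard and equivalent in content. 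The genuine divergence is in the necessity of $\int f\ln_+|x|\,dx<\infty$. The paper stays entirely on the maximal-function side: having subtracted the local part $h$ (Proposition \ref{close}), it is left with $g=\sum_k\mu_k(\chi_{Q_k}-\theta)$ in $\mathcal H^1(\R^n)$ and proves the pointwise lower bound $\mathcal M_\varphi g(x)\gtrsim(4|x|)^{-n}\sum_{|j|\ge 6|x|}\mu_j$, whose integral forces $\sum_j\mu_j(1+\ln_+|j|)<\infty$ (Proposition \ref{far}). You instead pair $T_\theta f$ against the truncated logarithms $b_R=\min(\ln_+|x|,R)$, using the uniform $BMO$ bound and monotone convergence; this is valid --- for $g\in\mathcal H^1(\R^n)$ and $b$ a \emph{bounded} $BMO$ function the absolutely convergent integral $\int gb\,dx$ is indeed controlled by $\|g\|_{\mathcal H^1}\|b\|_{BMO}$, as one sees from an atomic decomposition converging in $L^1$ --- and it even spares you Proposition \ref{close} in the necessity direction, since no subtraction of $h$ is needed. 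What the duality route costs is self-containedness (Fefferman's theorem plus the stability of $BMO$ under truncation, versus the paper's elementary computation) and portability: the explicit lower bound for $\mathcal M_\varphi g$ is exactly what the paper later upgrades to the weighted spaces $\mathcal H^1_{\omega}(\R^n)$ in Proposition \ref{far2}, which is needed for the $\mathcal H_{\log}(\R^n)$ theorem, where a comparably clean duality is not available.
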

In particular, a function $f$ of integral $0$ satisfying \eqref{bouchut-cond} belongs to $\mathcal H^1(\R^n)$. 
\begin{proof}
Even if the sufficient condition has already been established by Bouchut (see the remark below), we will give a slightly different but complete proof of it.

Let us note first that $f$ belongs to $\mathfrak h^1(\R^n).$ Indeed, let us prove that the two sufficient conditions given in \eqref{ln}, \eqref{amalgam} are satisfied. It is straightforward for the first one. We now want to prove that condition \eqref{bouchut-cond}
 implies $\sum_{k\neq 0}\mu_k \ln_+ (\mu_k^{-1})<\infty,$ with $\mu_k=\|f_k\|_1.$  We divide this last sum into two, depending on whether $\mu_k>k^{-(n+1)}$ or not. For the first sum, the inequality comes from the assumption
 $$\int |f(x)|(1+\ln_+|x|)\, dx<\infty$$
 since 
 $$\int |f(x)|(1+\ln_+|x|)\, dx\gtrsim \sum_k (1+\ln_+|k|)\int |f_k(x)|\,dx=\sum_k \mu_k(1+\ln_+|k|) .$$
 For  $\mu_k\leq |k|^{-(n+1)}$, $|k|\ge 1$ using that $x\mapsto x\ln_+(x^{-1})$ is non-decreasing on $(0,e^{-1}]$, the sum of the corresponding terms is bounded by $$\sum_{|k|\ge 1} |k|^{-(n+1)}\ln |k|$$ which is finite.
 Hence, $f$ belongs to the local Hardy space $\mathfrak h^1(\R^n).$ It remains to deal with the non local part.\\
Without loss of generality, we assume $\|f\|_1=1.$ 
As before, we write $f=\sum_k f_k,$ with $f_k=f\chi_{Q_k}$, $k\in\Z^n$. We have as well 
$$ T_\theta f =\sum \left(f_k-\left(\int f_k\, dx\right)\theta\right).$$
We write 
$$T_\theta f_k= f_k-\left(\int f_k\, dx\right)\chi_{Q_k}+ \left(\int f_k\, dx\right)(\chi_{Q_k}-\theta).$$
We will consider separately the two parts.

We first prove the following proposition.
\begin{proposition}\label{close} Let $f_k$ be defined as above and $h:=\sum h_k,$  where $\displaystyle h_k:=f_k-\left(\int f_k \,dx\right)\chi_{Q_k}$ and assume condition  \eqref{equ-loc}. Then $h$ is in  $\mathcal H^1(\R^n).$    

\end{proposition}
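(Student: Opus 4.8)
The plan is to bound the $\mathcal H^1$ norm of $h$ directly through the smooth maximal function $\mathcal M_\varphi$, exploiting that each block $h_k$ is supported in the unit cube $Q_k$ and has \emph{vanishing integral}: since $|Q_k|=1$, one has $\int h_k=\int f_k-\left(\int f_k\right)\int\chi_{Q_k}=0$, and moreover $\|h_k\|_1\le 2\|f_k\|_1$. By subadditivity, $\mathcal M_\varphi h\le\sum_k\mathcal M_\varphi h_k$, so it suffices to bound $\sum_k\|\mathcal M_\varphi h_k\|_1$ and to show that it is controlled by the left-hand side of \eqref{equ-loc}. The essential observation is that no cancellation \emph{between} distinct cubes is needed; the mean-zero property of each individual $h_k$ already supplies all the required decay. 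For each fixed $k$ I would split $\R^n$ into a bounded neighbourhood $N_k:=k+3Q$ of the support and its complement, and estimate $\mathcal M_\varphi h_k$ separately on each.

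On the far region I would use the cancellation. For $x\notin N_k$ and any $t$ with $\varphi_t*h_k(x)\ne0$, the support condition on $\varphi$ forces $t\gtrsim|x-k|$; subtracting the constant $\varphi_t(x-k)$ (legitimate because $\int h_k=0$) and using the smoothness of $\varphi$ gives
$$|\varphi_t*h_k(x)|\le\|\nabla\varphi_t\|_\infty\,\tfrac{\sqrt n}2\,\|h_k\|_1\lesssim t^{-(n+1)}\|h_k\|_1\lesssim|x-k|^{-(n+1)}\|h_k\|_1.$$
Taking the supremum in $t$ and integrating over $|x-k|\gtrsim1$ yields $\int_{\R^n\setminus N_k}\mathcal M_\varphi h_k\,dx\lesssim\|h_k\|_1\lesssim\|f_k\|_1$. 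On the near region $N_k$ I would treat large and small scales differently. For $t\ge1$ the trivial estimate $|\varphi_t*h_k(x)|\le t^{-n}\|\varphi\|_\infty\|h_k\|_1\le\|\varphi\|_\infty\|h_k\|_1$ integrates over the bounded set $N_k$ to $\lesssim\|h_k\|_1\lesssim\|f_k\|_1$. For $t<1$ I would invoke Remark~\ref{nonnegative} to pass to $M^{loc}h_k\le Mh_k$, and then use linearity and subadditivity of the Hardy--Littlewood maximal function in the form $Mh_k\le Mf_k+\left|\int f_k\right|M\chi_{Q_k}$ to apply the $L\log L$ inequality \eqref{llogl} to $f_k$ and to $\chi_{Q_k}$ separately. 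Since $\int_{N_k}M\chi_{Q_k}\lesssim1$ and $\left|\int f_k\right|\le\|f_k\|_1$, this gives $\int_{N_k}\mathcal M_\varphi h_k\,dx\lesssim\|f_k\|_1+\int|f_k|\ln_+\!\left(|f_k|/\|f_k\|_1\right)dx$.

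Summing the near- and far-region bounds over $k\in\Z^n$ produces
$$\|\mathcal M_\varphi h\|_1\lesssim\sum_k\Big(\|f_k\|_1+\int|f_k|\ln_+\!\big(|f_k|/\|f_k\|_1\big)\,dx\Big)=\|f\|_1+\sum_k\int|f_k|\ln_+\!\big(|f_k|/\|f_k\|_1\big)\,dx,$$
which is finite by assumption \eqref{equ-loc}; hence $h\in\mathcal H^1(\R^n)$.

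I expect the main delicate point to be the near-region small-scale term: applying $L\log L$ to $h_k$ itself would force dividing by the possibly tiny norm $\|h_k\|_1$ inside the logarithm, which cannot be compared to the corresponding quantity for $f_k$. The clean route, and the step I would emphasize, is to apply \eqref{llogl} to $f_k$ and to $\chi_{Q_k}$ \emph{separately} via subadditivity of $M$, so that the constant subtraction never enters the logarithmic term. A secondary point worth checking is that the far-field tails, although their supports overlap across different $k$, sum without loss because each integrates to $\lesssim\|f_k\|_1$ and $\sum_k\|f_k\|_1=\|f\|_1$.
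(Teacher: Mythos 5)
Your proposal is correct and follows essentially the same route as the paper: decompose $h$ into the mean-zero blocks $h_k$, use the $L\log L$ inequality \eqref{llogl} on $f_k$ (treating the two terms of $h_k$ separately near the support) and the standard atom-type cancellation estimate $\mathcal M_\varphi h_k\lesssim \|h_k\|_1|x-k|^{-(n+1)}$ away from it, then sum over $k$ using \eqref{equ-loc}. Your explicit splitting of scales $t\ge 1$ versus $t<1$ and the remark about applying \eqref{llogl} to $f_k$ and $\chi_{Q_k}$ separately are just more detailed renderings of what the paper does implicitly.
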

\begin{proof}
We prove that the $h_k$'s are in $\mathcal H^1(\mathbb R^n)$ and that 

$$\sum  \|\mathcal M_\varphi  h_k\|_{1}<\infty.$$
This is done by a slight modification of the fact that atoms are in $\mathcal H^1(\mathbb R^n). $
Inside $k+2Q$, we consider the two terms of $h_k$ separately. We conclude directly for the part involving the characteristic function of $Q_k$, while, for $f_k$ we use \eqref{llogl}, which we recall here: 
\begin{equation}\label{llogl-global}
     \int_{k+2Q} M f_k\, dx \lesssim\|f_k\|_1+\int |f_k|\ln_+\left (\frac{f_k}{\|f_k\|_1}\right) dx.
 \end{equation}
 From condition \eqref{equ-loc}, the sum over $k$ of the right hand side is bounded. 
 It follows that
 $$\|\sum_k (M h_k)\chi_{k+2Q}\|_1<\infty.$$
 It remains to prove that 
 $$\|\sum_k ( \mathcal M_\varphi h_k)\chi_{(k+2Q)^c}\|_1<\infty.$$
By the zero-mean of $h_k$, the maximal function $\mathcal M_\varphi h_k $ is bounded by $\|h_k\|_1/|x-k|^{n+1}$ for $x\in \R^n\setminus (k+2Q)$. So
$$\int_{\R^n\setminus (k+2Q)}\mathcal M_\varphi h_k\, dx\lesssim \|h_k\|_1\lesssim \|f_k\|_1.$$
The sum of the corresponding integrals is bounded by some uniform constant. 
The conclusion follows.
\end{proof}

We consider now the remaining part $g:=T_\theta f-h$. Recall that $$g:=\sum_k \int f_k \,dx\left( \chi_{Q_k}-\theta\right).$$ The next lemma gives a sufficient condition for $g$ to be in $\mathcal H^1(\R^n).$ 
\begin{lemma}\label{far-lem} Let $a$ be bounded by $1$, of mean $0,$  and assume that $a$ is supported in $Q_0\cup Q_j,$ with $|j|>2.$ Then 
\begin{equation}\label{far-equ}
    \|a\|_{\mathcal H^1(\R^n)}\lesssim 1+ \ln|j|.
\end{equation} 
\end{lemma}
\begin{proof}
Observe that $|j|^{-1} a$ is an atom of $\mathcal H^1(\R^n),$ and thus has bounded norm. But we need a better estimate. As before, we use the classical inequality
\begin{equation}
    |\mathcal M_\varphi a(x)|\lesssim \frac {|j|}{|x|^{n+1}}
\end{equation}
for $|x|>2|j|.$ For $|x|\le 2|j|$  we use the elementary inequality
\begin{equation}
    |\mathcal M_\varphi a(x)|\lesssim M\chi_{Q_0}(x)+M\chi_{Q_j}(x)\lesssim \frac {1}{1+|x|^{n}}+\frac {1}{1+|x-j|^{n}}.
\end{equation}
  We get
\begin{equation}\label{maj-weight}\|\mathcal M_\varphi a\|_{1}\lesssim 2+\int_{|x|>2|j|} \frac {|j|dx}{|x|^{n+1}}+\int_{1<|x|<2|j|} \frac {dx}{|x|^{n}} +\int_{1<|x-j|<3|j|} \frac {dx}{|x-j|^{n}}.
\end{equation} 
The conclusion follows easily that \eqref{far-equ} holds.
\end{proof}

We have the following proposition.
\begin{proposition}\label{far} Let $\lambda_j$ be a sequence of real numbers indexed by $\mathbb Z^n$ and let $g:=\sum g_j,$  with $g_j:=\lambda_j(\chi_{Q_j}-\theta).$ Then $g$ is in $\mathcal H^1(\R^n)$ if 
\begin{equation}\label{for-far}
\sum_j|\lambda_j|\left(1+\ln_+|j|\right)<\infty.    
\end{equation}
Moreover, if the $\lambda_j'$s are nonnegative, it is a necessary condition for having $g$ in $\mathcal H^1(\R^n).$

\end{proposition}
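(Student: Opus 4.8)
The plan is to prove the two implications separately: sufficiency is an immediate consequence of Lemma~\ref{far-lem} and the triangle inequality in $\mathcal H^1$, while necessity is the substantive part and will be obtained by testing $g$ against a truncated logarithm. For the sufficient condition I would sum the pieces. Each $g_j=\lambda_j(\chi_{Q_j}-\theta)$ is a multiple of a mean-zero function, bounded by $\max(1,\|\theta\|_\infty)$ and supported in $Q_0\cup Q_j$. After dividing by this constant to meet the normalization of Lemma~\ref{far-lem}, that lemma gives $\|\chi_{Q_j}-\theta\|_{\mathcal H^1}\lesssim 1+\ln|j|$ for $|j|>2$; for the finitely many $j$ with $|j|\le 2$ the function $\chi_{Q_j}-\theta$ is a fixed bounded mean-zero compactly supported function, hence in $\mathcal H^1$ with norm $\lesssim 1\lesssim 1+\ln_+|j|$. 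Summing,
\begin{equation*}
\|g\|_{\mathcal H^1}\le\sum_j|\lambda_j|\,\|\chi_{Q_j}-\theta\|_{\mathcal H^1}\lesssim\sum_j|\lambda_j|\left(1+\ln_+|j|\right)<\infty,
\end{equation*}
the summability of the norms also ensuring that $\sum_j g_j$ converges in $\mathcal H^1(\R^n)$.

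For the necessary condition, assume $\lambda_j\ge 0$ and $g\in\mathcal H^1(\R^n)\subset L^1(\R^n)$. I first note that on the interior of each $Q_j$ with $j\ne 0$ all the $g_i$ with $i\ne j$ vanish while $g_j=\lambda_j$, so $g=\lambda_j$ there and $\|g\|_1\ge\sum_{j\ne 0}\lambda_j$; hence $\Lambda:=\sum_j\lambda_j<\infty$, and in particular $\sum_j g_j$ converges absolutely in $L^1$. The logarithmic part is obtained by pairing with $b_N(x):=\min(\ln_+|x|,\ln N)$, a bounded function which, being a double truncation of $\ln|x|\in\BMO$, satisfies $\|b_N\|_{\BMO}\lesssim 1$ uniformly in $N$. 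Since $g\in L^1$ and $b_N\in L^\infty$ the integral $\int g\,b_N\,dx$ is absolutely convergent, and as $\int g\,dx=0$ the $\mathcal H^1$–$\BMO$ duality gives $\big|\int g\,b_N\,dx\big|\lesssim\|g\|_{\mathcal H^1}\|b_N\|_{\BMO}\lesssim\|g\|_{\mathcal H^1}$, uniformly in $N$.

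It then remains to compute. By absolute $L^1$ convergence and boundedness of $b_N$,
\begin{equation*}
\int g\,b_N\,dx=\sum_j\lambda_j\int_{Q_j}b_N\,dx-\Big(\int\theta\,b_N\,dx\Big)\Lambda,
\end{equation*}
where the last term is bounded in absolute value by $c_\theta\Lambda$, with $c_\theta:=\|\theta\|_\infty\int_{Q_0}\ln_+|x|\,dx<\infty$, uniformly in $N$. On each $Q_j$ with $2\le|j|\le N$ one has $|x|\simeq|j|$ and $\ln_+|x|\le\ln N$, so $\int_{Q_j}b_N\,dx\gtrsim\ln|j|$; as all the terms $\lambda_j\int_{Q_j}b_N\,dx$ are nonnegative,
\begin{equation*}
\sum_{2\le|j|\le N}\lambda_j\ln|j|\lesssim\int g\,b_N\,dx+c_\theta\Lambda\lesssim\|g\|_{\mathcal H^1}+\Lambda,
\end{equation*}
uniformly in $N$. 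Letting $N\to\infty$ by monotone convergence yields $\sum_j\lambda_j\ln_+|j|<\infty$, which together with $\Lambda<\infty$ is exactly \eqref{for-far}.

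The main obstacle is precisely the uniform control in the duality step of the necessary direction. One cannot pair $g$ directly with the unbounded $\ln_+|x|$, because $g$ is not compactly supported and $\int g\ln_+|x|\,dx$ need not converge absolutely; nor can one replace $g$ by the partial sums $\sum_{|j|\le N}g_j$, since their $\mathcal H^1$ norms are exactly the quantity under investigation. The truncation $b_N$ resolves both issues at once—it turns the pairing into an honest $L^1$–$L^\infty$ integral while keeping $\|b_N\|_{\BMO}$ bounded—after which the nonnegativity of the $\lambda_j$ and monotone convergence deliver the conclusion.
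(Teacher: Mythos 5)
Your sufficiency argument is the same as the paper's: sum Lemma~\ref{far-lem} over $j$. For the necessity you take a genuinely different route. The paper never invokes duality: it fixes $\varphi$ with $0\le\varphi\le 1$ and $\varphi=1$ on $B(0,1/2)$, and shows by a direct computation that for $|x|>\sqrt n$ and $r=4|x|$ the quantity $-\varphi_r*g(x)$ is a sum of nonnegative terms bounded below by $r^{-n}\sum_{|j|\ge 6|x|}\lambda_j$; integrating this pointwise lower bound for $\mathcal M_\varphi g$ over $|x|\ge\sqrt n$ produces $\sum_j\lambda_j\ln_+|j|\lesssim\|\mathcal M_\varphi g\|_1$. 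You instead pair $g$ against the doubly truncated logarithm $b_N=\min(\ln_+|x|,\ln N)$ and use Fefferman duality with the uniform bound $\|b_N\|_{\BMO}\lesssim 1$, then let $N\to\infty$ by monotone convergence. Your argument is correct: the preliminary reduction $\Lambda=\sum_j\lambda_j\le\|g\|_1<\infty$ (from $g=\lambda_j$ on the interior of $Q_j$, $j\ne0$) legitimately justifies the term-by-term computation of $\int g\,b_N$, the $\theta$-term is uniformly bounded, and the identification of the duality pairing with the absolutely convergent integral is standard for $b_N\in L^\infty\cap\BMO$. What the paper's approach buys is that it is elementary and self-contained (no $\mathcal H^1$--$\BMO$ duality, no truncation lemma for $\BMO$ norms), and, more importantly, it transfers essentially unchanged to the weighted spaces $\mathcal H^1_\omega$ of Proposition~\ref{far2}, where the clean duality statement you rely on is not available; your approach is shorter modulo the standard machinery and avoids choosing a particular $\varphi$, but it is tied to the unweighted $\mathcal H^1$ setting. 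Minor quantitative point: for large $n$ the bound $\int_{Q_j}b_N\,dx\gtrsim\ln|j|$ needs $|j|\gtrsim\sqrt n$ rather than $|j|\ge 2$, but the finitely many remaining indices are absorbed into $\Lambda$, so nothing is lost.
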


The sufficiency of the condition comes from Lemma \ref{far-lem}. \\
Let us now assume that the $\lambda_j'$s are nonnegative and prove the necessity of the condition. We choose $\varphi$  such that $ 0\leq\varphi\leq 1,$ and $\varphi=1$ on the ball $B(0, 1/2).$ 

We will give a bound below of $|\varphi_r* g|(x)$ for $|x|>\sqrt n$ and $r=4|x|$. We first prove that $$-\varphi_r* g(x)=\sum_j \lambda_j(\varphi_r*\theta(x)-\varphi_r*\chi_{Q_j}(x))$$ is nonnegative as a sum of nonnegative terms. Indeed, for $|x|>\sqrt n$ and $r=4|x|$, the support of $\theta$ is entirely contained in the set of $y$ for which $\varphi_r(x-y)=1$. So, on one hand
$$\varphi_r*\theta(x)=\frac 1{r^n}.$$
On the other hand, since $1$ is the maximum of $\varphi,$ the other terms $\varphi_r*\chi_{Q_j}(x)$ are bounded by $r^{-n}.$ We have proved our claim on the sign of $-\varphi_r* g(x).$ Now, a bound below is given by $r^{-n}\sum_{j\in J}\lambda_j$ where $J=\{j\in \mathbb{Z}^n\;;\; 
B(x,r)\cap Q_j=\emptyset\}.$ Indeed, for such $j$'s, $\varphi_r*\chi_{Q_j}(x)=0.$
But this set of indices contains all $j$'s such that $|j|\ge 6|x|$ since, under these conditions, 
$$\mbox {dist}(B(x, 4|x|), j)\geq |x|\geq \sqrt n=\mbox{diam}(Q_j).$$

Eventually, 
  $$|\mathcal M_\varphi g(x)|\geq|\varphi_{4|x|}*g(x)|\ge \frac{1}{(4|x|)^n}\sum_{j\geq 6|x|}\lambda_j. $$
 and, integrating over the set $|x|\ge \sqrt n$,
$$\|\mathcal M_\varphi g\|_1\gtrsim \sum_{|j|>6\sqrt n} \lambda_j\ln ( |j|/6).$$
We conclude at once.
\end{proof}

\begin{proof}[End of the proof of Theorem \ref{bouchut}.]

The sufficiency of the condition 
$$\int |f(x)|\left(1+\ln_+|f(x)|+\ln_+(|x|)\right)\,dx<\infty$$
is a consequence of Propositions \ref{close} and \ref{far}. Indeed, we already mentioned that condition \eqref{equ-loc} holds. Hence, what we called $h$ in Proposition \ref{close} belongs to $\mathcal H^1(\R^n)$. To prove that $g=f-h$ is in $\mathcal H^1(\R^n)$, one has to prove estimate \eqref{for-far} with $\displaystyle\lambda_j=\int f_j.$ Since $|x|\simeq |j|$ on $Q_j$, it follows easily from the discretization of the integral 
$\int |f(x)|\left(1+\ln_+(|x|)\right)dx.$
\medskip

We now prove the necessity for a nonnegative function $f$ with $T_\theta f\in\mathcal H^1(\R^n)$. As $\mathcal H^1(\R^n)$ is contained in $\mathfrak h^1(\R^n)$, $T_\theta f$ belongs to $\mathfrak h^1(\R^n)$. Since $\theta$ is also in $\mathfrak h^1(\R^n),$ the function $f$ itself is in $\mathfrak h^1(\R^n).$ So the condition \eqref{equ-loc} is satisfied.
By Proposition  \ref{close},  $h$ is in $\mathcal H^1(\R^n).$ So $g=f-h$ is in $\mathcal H^1(\R^n).$ By Proposition \ref{far}, this implies $$\sum_{|k|>1}\mu_k (1+\ln_+ |k|)<\infty$$ with $\displaystyle\mu_k=\int f_k.$
As before, this last condition reads 
$$\int |f(x)|(1+\ln_+(|x|))\, dx<\infty.$$
Eventually, combined with the estimate \eqref{ln},
one gets 
$$\int |f(x)|(1+\ln_+|x|+\ln_+(|f(x)|)\, dx<\infty.$$

\end{proof}
\smallskip

\noindent {\bf Remark.} The precise statement of Bouchut in  \cite{Bouchut} is the following. 

{\sl Let $T$ be a singular integral operator given by the convolution by $K.$ Then the operator $R_K, $ defined  
by
$$R_K(f)(x)=T(f)(x)-\left(\int f\,dy\right) K(x)\chi_{|x|>1}$$
maps the space of functions that satisfy \eqref{log} into $L^1(\R^n).$}

This may be obtained as a consequence of Theorem \ref{bouchut}. Indeed, let $f$ satisfy \eqref{log}. We know that $T_\theta f$ is in $\mathcal H^1(\R^n).$ Since singular integral operators map $\mathcal H^1(\R^n)$ into itself and since the integral of $T_\theta f$ is zero, $R_K(T_\theta f)$ is integrable. So it is sufficient to see that $R_K(\theta)$ is also in $L^1(\R^n).$  Because of $L^p$ estimates for singular integral operators, $T(\theta)$ is locally integrable. The integrability on $\{|x|>2\}$  follows from the estimate
$$\int_{|x|>2}\int_{|y|<1}|K(x-y)-K(x)|\theta (y)dy dx<\infty$$
since the kernel $K,$ which is the kernel of a singular integral, satisfies
$$\int_{|x|>2}|K(x-y)-K(x)|dx<\infty$$
for $|y|<1.$

Conversely, by using the characterization of $\mathcal H^1(\R^n)$ through Riesz transforms, it is easy to see that the statement of Bouchut implies the sufficient condition in Theorem \ref{bouchut}. 

\section{The local Hardy space $\mathfrak h_{\log}$} 

We first consider the local Hardy space $\mathfrak h_{\log}(\mathbb R^n)$. 
The following result gives the analogue of Theorem \ref{bouchut-loc}. 
\begin{theorem}\label{bouchut-loc_log}  Let $f$ be an integrable function. Then $M^{loc}f$ is in $L_{\log}(\mathbb R^n)$ if and only if  \begin{equation}\label{eq-loglog}
    \int |f(x)|\left(1+\ln_+\frac{\ln(e+|f(x)|)}{\ln(e+|x|)}\right)dx <\infty.
\end{equation}  
As a consequence, if $f$ satisfies this condition then $f$ is in $\mathfrak h_{\log}(\mathbb R^n)$. 
Conversely, if $f$ is nonnegative and $f$ is in $\mathfrak h_{\log}(\mathbb R^n),$ then condition \eqref{eq-loglog} holds.
\end{theorem}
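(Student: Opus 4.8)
The plan is to follow the architecture of the proof of Theorem~\ref{bouchut-loc}, replacing the $L^1$ layer-cake bookkeeping by its Musielak--Orlicz analogue. Write $L_{\log}=L^\Psi$ with $\Psi(x,t)=\frac{t}{1+\ln_+t+\ln_+|x|}$. The one structural fact that legitimises the cube decomposition is that, for each fixed $x$, the map $t\mapsto\Psi(x,t)$ is subadditive, since $\Psi(x,t)/t=\bigl(1+\ln_+t+\ln_+|x|\bigr)^{-1}$ is non-increasing in $t$. Because $M^{loc}f\le\sum_k M^{loc}f_k$ and $M^{loc}f_k$ is supported in $k+2Q$, subadditivity gives $\int\Psi(x,M^{loc}f)\,dx\le\sum_k\int_{k+2Q}\Psi(x,M^{loc}f_k)\,dx$, reducing the sufficiency to a uniform per-cube estimate. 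For the necessity I use instead that $f\ge0$ forces $M^{loc}f\ge M^{loc}f_k$ pointwise, so $\int_{Q_k}\Psi(x,M^{loc}f)\,dx\ge\int_{Q_k}\Psi(x,M^{loc}f_k)\,dx$, and then sum the resulting lower bounds.

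The core is a local Orlicz--Stein estimate. On $k+2Q$ one has $1+\ln_+|x|\simeq\ln(e+|x|)\simeq L_k:=\ln(e+|k|)$, so $\Psi(x,\cdot)$ is comparable there to $\Phi_L(t):=\frac{t}{L+\ln_+t}$ with $L=L_k$. For $v$ supported in such a cube I evaluate $\int\Phi_L(M^{loc}v)=\int_0^\infty\Phi_L'(s)\,|\{M^{loc}v>s\}|\,ds$. The integral is split at a threshold $s\simeq\|v\|_1$: below it the trivial bound by the cube measure gives a contribution $\lesssim\|v\|_1/L\le\|v\|_1$, which is summable; above it I insert, for the upper bound, the standard refinement $|\{M v>s\}|\lesssim s^{-1}\int_{|v|>cs}|v|$ (using $M^{loc}\le M$), and for the lower bound the Stein-type inequality \eqref{SteinIneq}. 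Fubini then turns the $s$-integral into a weight acting on $|v|$: with $G_L(u):=\int_1^u\frac{\Phi_L'(s)}{s}\,ds$ one obtains
\begin{equation*}
\int|v|\,G_L(c|v|)\,dx-C\|v\|_1\ \le\ \int\Phi_L(M^{loc}v)\,dx\ \le\ \int|v|\,G_L(c'|v|)\,dx+C\|v\|_1 .
\end{equation*}
Since $\Phi_L'(s)\simeq(L+\ln s)^{-1}$ for $s>1$ and $L\ge2$, a direct computation yields $G_L(u)\simeq\ln\frac{L+\ln_+u}{L}$.

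It remains to identify this weight with the integrand of \eqref{eq-loglog}. On $Q_k$, using $L_k\simeq\ln(e+|x|)$ and $\ln_+u=\ln(e+u)+O(1)$, one gets $\frac{L_k+\ln_+u}{L_k}\simeq1+\frac{\ln(e+u)}{\ln(e+|x|)}$, and the elementary identity $\ln(1+\rho)=\ln_+\rho+O(1)$ for $\rho\ge0$ gives $G_{L_k}(|f_k|)=\ln_+\frac{\ln(e+|f_k|)}{\ln(e+|x|)}+O(1)$ for $x\in Q_k$. Multiplying by $|f_k|$, integrating and summing, every $O(1)$ error contributes at most $C\|f\|_1<\infty$, so $\sum_k\int_{Q_k}|f_k|\,G_{L_k}(|f_k|)\,dx$ is finite exactly when the left-hand side of \eqref{eq-loglog} is. Combined with the two chains of inequalities this settles both implications; the dilation $\lambda$ in the definition of $L^\Psi$-membership and the constants $c,c'$ only shift the argument of $G_L$ by bounded factors, again producing $O(\|f\|_1)$ errors. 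The finitely many cubes near the origin, where $L_k=O(1)$, are treated by the same estimate with $L=O(1)$ (the compactly supported case of \cite{Sola}), and the passage to $\mathfrak h_{\log}$ follows from Remark~\ref{nonnegative} as in Theorem~\ref{bouchut-loc}.

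The step I expect to be the main obstacle is the bookkeeping in this local Orlicz--Stein estimate. One must verify that the low-$s$ part of the layer cake really contributes only a multiple of $\|v\|_1$ (rather than an absolute constant per cube, which would not sum), and that the normalisation of $v$, the weak-type constants, and the factor $\lambda$ all enter $G_L$ solely through bounded dilations of its argument. The delicate point is confirming that the resulting weight is genuinely $\ln_+$ of the \emph{ratio} $\ln(e+|f|)/\ln(e+|x|)$ --- and not some difference of iterated logarithms --- which rests precisely on the comparison $L_k\simeq\ln(e+|x|)$ on each cube and on the identity $\ln(1+\rho)=\ln_+\rho+O(1)$.
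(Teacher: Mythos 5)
Your architecture is the same as the paper's (cube decomposition, quasi-subadditivity of $\Psi(x,\cdot)$, layer-cake with $\Psi'$, the weak-type refinement for the upper bound, the Stein inequality \eqref{SteinIneq} for the lower bound, Fubini, then identification of the weight with $\ln_+\bigl(\ln(e+|f|)/\ln(e+|x|)\bigr)$ up to $O(1)$ errors). But there is one genuine error in your central two-sided per-cube estimate, and it sits exactly at the point you yourself flag as the main obstacle. After inserting $|\{Mv>s\}|\lesssim s^{-1}\int_{|v|>cs}|v|$ for $s>\|v\|_1$ and applying Fubini, the weight acting on $|v(x)|$ is $\int_{\|v\|_1}^{|v(x)|/c}\Phi_L'(s)s^{-1}\,ds$, not $G_L(c'|v(x)|)=\int_1^{c'|v(x)|}\Phi_L'(s)s^{-1}\,ds$. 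The discarded range $\|v\|_1<s<1$ contributes, after integration in $x$, an extra term of size $\frac{\|v\|_1}{L}\ln_+\bigl(\|v\|_1^{-1}\bigr)$ (this is precisely the second term in the paper's Lemma \ref{MaxQ}, estimate \eqref{estimQ}). This term is \emph{not} $O(\|v\|_1)$ with a uniform constant: take $f_k=\chi_{E_k}$ with $|E_k|=e^{-|k|}$; then \eqref{SteinIneq} gives $\int_{k+2Q}\Phi_{L_k}(M^{loc}f_k)\gtrsim \mu_k\ln(1/\mu_k)/L_k\simeq \mu_k\,|k|/\ln|k|$, while your claimed upper bound $\int|f_k|G_{L_k}(c'|f_k|)+C\|f_k\|_1$ is $O(\mu_k)$. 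So the inequality as you state it is false.

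The conclusion survives, but only after reinstating this term and giving the separate summation argument the paper uses: split the indices according to whether $\mu_k>|k|^{-(n+1)}$ or not; in the first case $\ln_+(1/\mu_k)\lesssim\ln(e+|k|)=L_k$, so the term is $\lesssim\mu_k$ and sums by $\sum_k\mu_k=\|f\|_1<\infty$; in the second case $\mu_k\ln_+(1/\mu_k)\lesssim|k|^{-(n+1)}\ln|k|$, which is summable over $\Z^n$. Without this step your sufficiency proof does not close, because the worst-case bound $\mu_k\ln_+(1/\mu_k)/L_k\le (e\,L_k)^{-1}$ is an absolute constant of order $1/\ln(e+|k|)$ per cube, and $\sum_k 1/\ln(e+|k|)$ diverges — exactly the failure mode you warn against but attribute to the wrong range of $s$. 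The rest of your argument (the asymptotics $G_L(u)\simeq\ln\frac{L+\ln_+u}{L}$, the identification with $\ln_+$ of the ratio via $\ln(1+\rho)=\ln_+\rho+O(1)$ and $L_k\simeq\ln(e+|x|)$ on $Q_k$, the lower bound for nonnegative $f$ via $M^{loc}f\ge M^{loc}f_k$, and the passage to $\mathfrak h_{\log}$ via Remark \ref{nonnegative}) is sound and matches the paper.
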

\begin{proof}
 It will be convenient to take an equivalent function for $\Psi,$ which leads to the same spaces. Namely, let $$\overline \Psi(x,t):=\frac t{\ln (e+t)+\ln( e+|x|)}.$$ 
We omit the bar from now on. The function $\Psi$ is non-decreasing in $t$. An explicit computation gives
\begin{equation}\frac 12 \frac{\Psi(x,t) }{t} \leq \frac d{dt} \Psi (x, t)\leq \frac{\Psi(x,t) }{t}.
\end{equation} 
It is also doubling:
$$\Psi(x,2t)\lesssim \Psi (x, t).$$

We first establish the following lemma, which may be seen as a precised version of the corresponding result in \cite{Sola}.
\begin{lemma}\label{MaxQ}
Let $f_k$ be supported in $k+Q$ with $\mu_k:=\Vert f_k\Vert_{1}\le 1$. One has
\begin{align}\label{estimQ}
\int_{k+2Q} \Psi(x,M(f_k))\,dx\lesssim \frac{\mu_k}{\ln(e+|k|)}&+\frac{\mu_k}{\ln(e+|k|)}\ln_+\left(\frac 1{\mu_k}\right)\\&+\int_{k+Q}|f_k(x)|\left(1+\ln_+\frac{\ln(e+|f_k(x)|)}{\ln(e+|k|)}\right)\,dx .\nonumber
\end{align}
\end{lemma}
\begin{proof}
We have
\begin{equation}
    \Psi(x,t)\simeq \Psi(|k|, t)\qquad\text{for } x \in k+2Q.
\end{equation}
We write, as it is classical (see for instance \cite{stein}, Chapter 1) 
\begin{align*}
\int_{k+2Q} \Psi(k,M(f_k)(x))\, dx &\simeq\int _0^\infty \Psi'(k, t)|\{x\in k+2Q \;;\;Mf_k(x)>t\}| dt\\
\lesssim \Psi (k,\Vert f_k\Vert_1)+&\int_{\Vert f_k\Vert_{1}}^{\infty}\frac{\Psi(k,t)}t\, |\{x\in k+2Q \;;\;Mf_k(x)>t\}| \,dt.
\end{align*}
Since $\displaystyle\Psi(x, t)\leq \frac t{\ln (e+|x|)},$ the first term is bounded by $\displaystyle\frac{\mu_k}{\ln(e+|k|)}.$

Using the maximal theorem, one has the inequality
$$|\{x\in k+2Q \;;\;Mf_k(x)>t\}|\lesssim \frac 1 t \int_{|f_k|\ge t/2} | f_k (x)|\,dx.$$
So, using this last inequality and exchanging the integrals,  we get $$\int_{k+2Q} \Psi(k,M(f_k)(x))\, dx \lesssim \frac{\mu_k}{\ln(e+|k|)}+\int_{2|f_k(x)|>\mu_k} |f_k(x)|\int_{\mu_k}^{2|f_k(x)|}\frac{\Psi(k, t)}{t^2}\,dt\,dx.$$

We cut the last integral into the integral below $1$ and above $1.$ Since $\Psi$ is doubling, it is straightforward to see that such integrals are also doubling, namely
\begin{equation}
    \int_1^{2s}\frac{\Psi(x,s)}{s^2}\,ds\lesssim  \int_1^{s}\frac{\Psi(x,s)}{s^2}\,ds
\end{equation}
for $s>2.$ We will use the following properties satisfied by $\Psi.$
\begin{enumerate}
   
    \item for $0<t<1$,$$\int_t^1\frac{\Psi(x,s)}{s^2}\,ds\simeq \frac{\ln_+ (1/t)}{\ln(e+|x|)},$$
    \item for $t>1$, $$\ln_+\frac{\ln(e+t)}{\ln(e+|x|)}\lesssim\int_1^t\frac{\Psi(x,s)}{s^2}\,ds \lesssim 1+\ln_+\frac{\ln(e+t)}{\ln(e+|x|)}.$$
\end{enumerate}
Indeed,  we write, on one hand
for $0<t<1$,$$\int_t^1\frac{\Psi(x,s)}{s^2}\,ds=\int_t^1\frac 1{s(\ln(e+s)+\ln(e+|x|))}\,ds\simeq \int_t^1\frac 1{s\ln(e+|x|)}\,ds$$
on the other hand, for $t>1$,
\begin{eqnarray*}
\int_1^t\frac{\Psi(x,s)}{s^2}\,ds&=&\int_1^t\frac 1{s(\ln(e+s)+\ln(e+|x|))}\,ds\\
&\simeq& \int_1^t\frac 1{(s+e)(\ln(e+s)+\ln(e+|x|))}\,ds\le\ln\left(1+\frac{\ln(e+t)}{\ln(e+|x|)}\right).
\end{eqnarray*}
Using these properties, we get the inequalities
$$\int_{\mu_k}^1 \frac{\Psi(k,t)}{t^2}\,dt\lesssim \frac{1}{\ln(e+|k|)}\ln_+\left(\frac 1{\mu_k}\right),$$
and 
$$\int_{1}^{2|f_k(x)|} \frac{\Psi(k,t)}{t^2}\,dt\lesssim 1+ \ln_+\left(\frac{\ln(e+|f_k(x)|)}{\ln(e+|k|)}\right).$$
It follows that
\begin{eqnarray*}\int_{k+2Q} |f_k(x)|\int_{\mu_k}^{2f_k(x)}\frac{\Psi(k, t)}{t^2}\,dt\,dx &\lesssim&\frac{\mu_k}{\ln(e+|k|)}+ \frac{\mu_k}{\ln(e+|k|)}\ln_+\left(\frac 1{\mu_k}\right)+\\ & & \int_{k+Q}|f_k(x)|\left(\ln_+\left(\frac{\ln(e+|f_k(x)|)}{\ln(e+|k|)}\right)\right)\,dx.
\end{eqnarray*}

This ends the proof of the Lemma.
\end{proof}

Let us prove the local theorem \ref{bouchut-loc_log}.
Let $f$ be an integrable function. Without loss of generality, we can assume $\Vert f\Vert_{1}=1$. We first prove the sufficiency of condition \eqref{eq-loglog} to have $M^{loc}(f)\in L_{\log}(\R^n)$. We have to estimate
$$\int_{\R^n} \Psi(x,M^{loc}(f)(x))\,dx. $$  We write $f$ as $\sum f_k$ with $f_k$ supported in $k+ Q$, $k\in\Z$, $\mu_k:=\Vert f_k\Vert_{1}$. At this point, we recall that $L_{\log}$ is only a quasi Banach space, so that we need to be careful. We will use the following property, valid for nonnegative functions $g_j,$ that 
\begin{equation}
    \int_{\R^n} \Psi(x,\sum g_j (x))\,dx\lesssim \sum \int_{\R^n} \Psi(x,g_j(x))\,dx.  
\end{equation}
This is an easy consequence of the fact that the function $\displaystyle t\mapsto \frac{\Psi(x,t)}t$ is nonincreasing.
We apply Lemma \ref{MaxQ} to $f_k$ to get,
\begin{eqnarray*}
&&\int\Psi(x,M^{loc}(f_k)) \,dx\le \int_{k+2Q} \Psi(x,M(f_k)) \,dx\\
&\lesssim& \frac{\mu_k}{\ln(e+|k|)}\left(1+\ln_+\left(\frac 1{\mu_k}\right)\right)+\int_{k+Q}|f_k(x)|\ln_+\frac{\ln(e+|f_k(x)|)}{\ln(e+|k|)}\,dx
\end{eqnarray*}
It remains to sum over $k$. The sums corresponding to the two first terms are bounded by the norm of $f$ in $L^1$ (we cut the second sum into two parts as in the preceding proof by comparing $\mu_k$ to $|k|^{-(n+1)}$). Eventually, using that $|x|\simeq |k|$ on $k+Q$,  
$$
\int_{\R^n} \Psi(x,M^{loc}(f)(x))\,dx \lesssim \Vert f\Vert_{1}+\int |f(x)|\ln_+\frac{\ln(e+|f(x)|)}{\ln(e+|x|)}\, dx.
$$
The conclusion follows for the sufficient condition. For the necessary condition, we use Stein Inequality \eqref{SteinIneq} as for the characterization of $\mathfrak h^1(\R^n)$. It gives
\begin{eqnarray*}
\int\Psi(x,M^{loc}(f_k))\,dx&\gtrsim& \int_{(k+Q)\cap |f_k(x)|>1} |f_k(x)|\int_1^{c^{-1}|f_k(x)|}\frac{\Psi (|k|,s) }{s^2}\,ds\,dx\\
&\gtrsim &\int_{k+Q}|f_k(x)|\ln_+\frac{\ln_+(|f_k(x)|)}{\ln(e+|k|)}\,dx
\end{eqnarray*}
Summing on $k$ gives the result as before.
Hence, we proved that $M^{loc}f$ belongs to $L_{\log}(\R^n)$ if and only if condition \eqref{eq-loglog} holds. The result on $\mathfrak h_{\log}(\R^n)$ is an easy consequence of the remark we did before that $\mathcal M_\varphi^{loc} (f)\leq CM^{loc}f$ while, when $\varphi$ and $f$ are nonnegative, a reverse inequality is also valid.

\end{proof}

Before leaving the local estimates, let us answer a natural question. What can we say for functions that are only locally integrable? We will see that there exist locally integrable functions, which are not integrable but belong to $\mathfrak h_{\log}(\R^n).$ Thanks to the differentiation Lebesgue Theorem $|f|\leq M^{loc}f$, hence it is clear that $f\in L_{\log}(\R^n)$ is a necessary condition for a nonnegative function $f$ to belong to $\mathfrak h_{\log}(\R^n).$ The following lemma goes into this direction. \begin{lemma}\label{loc-log-lem}
Assume that $f$ is nonnegative and locally integrable. If $$\int \Psi(x, M^{loc}f) \,dx<\infty,$$
then the function $f$ belongs to the weighted space $L^1(\ln (e+|x|))^{-1}dx).$\\
In particular, if $\displaystyle\int \Psi(x, M^{loc}f) dx=1,$ then 
$$\int_{\R^n} \frac{|f(x)|}{\ln(e+|x|)}\, dx\lesssim 1.$$
\end{lemma}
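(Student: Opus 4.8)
The plan is to discretize both integrals over the unit cubes $Q_k$ and to reduce everything to a summable estimate on the masses $\mu_k:=\int_{Q_k}f$. Writing $f=\sum_k f_k$ and using that $\ln(e+|x|)\simeq\ln(e+|k|)$ on $Q_k$, one has $\int_{\R^n}\frac{f(x)}{\ln(e+|x|)}\,dx\simeq\sum_k\frac{\mu_k}{\ln(e+|k|)}$, so the asserted membership $f\in L^1((\ln(e+|x|))^{-1}dx)$ is equivalent to $\sum_k\frac{\mu_k}{\ln(e+|k|)}<\infty$. Under the normalization $\int_{\R^n}\Psi(x,M^{loc}f)\,dx\le 1$, it thus suffices to prove $\sum_k\frac{\mu_k}{\ln(e+|k|)}\lesssim 1$.

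The first key step is a lower bound, for each $k$, of the form $\int_{Q_k}\Psi(x,M^{loc}f)\,dx\gtrsim\Psi(|k|,\mu_k)$. Since $f\ge f_k\ge 0$ we have $M^{loc}f\ge M^{loc}f_k$, and $\Psi(x,\cdot)$ is nondecreasing, so it is enough to bound $\int_{Q_k}\Psi(x,M^{loc}f_k)$ from below. Here I would produce, by a purely geometric argument, a subset $E_k\subset Q_k$ with $|E_k|\gtrsim 1$ on which $M^{loc}f_k\gtrsim\mu_k$: cover $Q_k$ by a fixed number $N=N(n)$ of balls of radius $\rho<1/2$; one of them, say $B$, carries mass $\int_B f_k\ge\mu_k/N$, and for every $x\in B$ the admissible radius $2\rho<1$ gives $M^{loc}f_k(x)\ge(2\rho)^{-n}\int_B f_k\gtrsim\mu_k$. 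Taking $E_k=B$ and invoking $\Psi(x,t)\simeq\Psi(|k|,t)$ on $k+2Q$ together with the monotonicity and doubling of $\Psi$, one gets $\int_{Q_k}\Psi(x,M^{loc}f)\gtrsim|E_k|\,\Psi(|k|,\mu_k)\gtrsim\Psi(|k|,\mu_k)$. As the $Q_k$ are disjoint, summing yields $\sum_k\Psi(|k|,\mu_k)\lesssim\int_{\R^n}\Psi(x,M^{loc}f)\le 1$; in particular each term satisfies $\Psi(|k|,\mu_k)\lesssim 1$.

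The final step, which I expect to be the crux, is the elementary observation that this uniform bound already forbids the dangerous regime $\mu_k\gg\ln(e+|k|)$. Writing $\Psi(|k|,\mu_k)=\frac{\mu_k}{\ln(e+\mu_k)+\ln(e+|k|)}\le C$, i.e. $\mu_k\le C\ln(e+\mu_k)+C\ln(e+|k|)$, and using that $\mu_k-C\ln(e+\mu_k)\to\infty$, one concludes $\mu_k\lesssim\ln(e+|k|)$; hence $\ln(e+\mu_k)\lesssim\ln(e+|k|)$ and therefore $\frac{\mu_k}{\ln(e+|k|)}\simeq\Psi(|k|,\mu_k)$. Summing over $k$ gives $\sum_k\frac{\mu_k}{\ln(e+|k|)}\lesssim\sum_k\Psi(|k|,\mu_k)\lesssim 1$, the desired estimate; the qualitative statement follows identically with ``$\le 1$'' replaced by ``$<\infty$''. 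The delicate point is precisely that one cannot simply use the pointwise bound $f\le M^{loc}f$: on the set where $M^{loc}f$ is enormous while $|x|$ is large, $\frac{M^{loc}f}{\ln(e+|x|)}$ is \emph{not} controlled by $\Psi(x,M^{loc}f)$. It is essential to retain the genuine mass $\mu_k=\int_{Q_k}f$ rather than the pointwise values of $M^{loc}f$, and to exploit that the global smallness of $\int\Psi(x,M^{loc}f)$ caps each $\mu_k$ at a merely logarithmic size.
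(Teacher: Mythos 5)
Your proof is correct and follows the same overall strategy as the paper: reduce the claim to $\sum_k \mu_k/\ln(e+|k|)<\infty$, show $\sum_k\Psi(|k|,\mu_k)\lesssim\int\Psi(x,M^{loc}f)\,dx$ by localizing the maximal function to pieces of $Q_k$ of fixed size (the paper uses the dyadic subcubes of side $2^{-N}$ from its earlier comparison \eqref{in-M} together with subadditivity of $\Psi(x,\cdot)$, whereas you pick a single ball of radius $\rho<1/2$ carrying a fixed fraction of the mass $\mu_k$ --- an equivalent device), and then convert the bound on $\Psi(|k|,\mu_k)$ into one on $\mu_k/\ln(e+|k|)$. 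The one genuine difference is the endgame: the paper splits according to whether $\mu_k\le|k|$, notes that the exceptional case occurs only finitely often because $\Psi(k,\mu_k)\to 0$, and handles the quantitative statement by a separate counting argument; you instead observe that $\Psi(|k|,\mu_k)\le C$ already forces $\mu_k\lesssim\ln(e+|k|)$ (since $t-C\ln(e+t)\to\infty$), whence the term-by-term equivalence $\Psi(|k|,\mu_k)\simeq\mu_k/\ln(e+|k|)$. This is cleaner, treats the qualitative and quantitative statements uniformly, and avoids the paper's case analysis; its only cost is that in the qualitative case the implicit constants depend on the value of $\int\Psi(x,M^{loc}f)\,dx$, which is harmless for the conclusion.
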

\begin{proof}
If we note as before $\displaystyle\mu_k:=\int f_k\, dx,$ it is sufficient to prove that 
\begin{equation}\label{muk}
    \sum_k \frac{\mu_k}{\ln(e+|k|)}<\infty.
\end{equation}
Let $N$ be the smallest integer such that $2^{-N}\sqrt n\leq 1.$  Using the comparison between $M^{loc}$ and $M^d$ given in \eqref{in-M}, if we consider any sub-cube $R$ of $k+Q$ such that $\ell(R)=2^{-N},$ then 
$\displaystyle M^{loc} f_k(x)\gtrsim \int_R f_k\, dy$ for $x\in R.$ We call $\mathcal Q_{k,N}$ the family of these $2^{Nn}$ subcubes. It follows that
\begin{align*}
    \Psi(k, \mu_k)&\leq \sum_{R\in \mathcal Q_{k,N}} \Psi(k,\int_R f_k(y)\, dy )\\ &\lesssim \sum_{R\in \mathcal Q_{k,N}}\int_R  \Psi(k, M^{loc}f_k)\,dx=\int_{k+Q}  \Psi(k, M^{loc}f_k)\,dx.
\end{align*}
So
$$\sum_k\Psi(k, \mu_k)\lesssim \sum_k\int_{k+Q} \Psi(x, M^{loc}f_k) \, dx\leq \int \Psi(x, M^{loc}f)\, dx.$$
Next, one verifies that $\displaystyle\frac{\mu_k}{\ln(e+|k|)}\leq 2\Psi(k, \mu_k)$
unless $\mu_k$ is larger than $|k|,$ which implies, by monotonicity, that $\Psi(k, |k|)\leq 2\Psi(k, \mu_k).$  Since the sequence $(\Psi(k, \mu_k))_k$ tends to $0,$ this happens only for a finite number of terms, which proves the convergence of the series \eqref{muk}.\\
Assume now that 
 $\displaystyle\int \Psi(x, M^{loc}f)\, dx=1,$
 which implies that 
 $$\sum_k\Psi(k, \mu_k)\lesssim 1.$$
 The sum $\displaystyle\frac{\mu_k}{\ln(e+|k|)},$ when restricted to those $k$ for which $\mu_k\leq |k|$ is bounded by some uniform constant.
 The number of the other terms is bounded by a uniform constant. Moreover, for any of them 
 $\displaystyle\frac{\mu_k}{\ln(e+\mu_k)}\le 2\Psi(|k|,\mu_k)\lesssim 1.$ It follows that each of these $\mu_k$ is bounded by some uniform constant. The estimate of the integral follows at once.
\end{proof}
The  function 
$$ f(x):=(1+|x|)^{-n}\left(\ln(e+|x|)\right)^{-1},$$
which varies very slowly, gives an example of non integrable  function in $\mathfrak h_{\log}(\R^n).$ The following proposition revisits Theorem \ref{bouchut-loc_log} without assuming a priori that $f$ is integrable. It is much more technical. The notations for $f_k$ and $\mu_k$ are the same.
\begin{proposition}
Let $f$ be locally integrable.  Then we have following inequality.
 \begin{align}
   \int \Psi(x, M^{loc}f)\,dx&\lesssim  \sum_{k}\frac{\mu_k}{\ln(e+|k|)}+\int_{\R^n}|f(x)|\ln_+\left(1+\frac{\ln(e+|f(x)|)}{\ln(e+|x|)}\right)\,dx\nonumber\\ & +\sum_k\int_{\R^n}|f_k(x)|\frac{\ln_+\left(\frac {\min(|f_k(x)|,|k|)}{\mu_k}\right)}{\ln(e+|k|)}\, dx .  \end{align}
   Moreover, the finiteness of the right hand side  is a necessary condition for $M^{loc} f$ to be  in $L_{\log}(\R^n).$
\end{proposition}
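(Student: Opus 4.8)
The plan is to prove both the inequality and its necessity by a single per-cube analysis, exactly as in the integrable case (Lemma~\ref{MaxQ} and the proof of Theorem~\ref{bouchut-loc_log}), but carried out \emph{without} the normalization $\mu_k\le 1$. Writing $f=\sum_k f_k$ with $f_k=f\chi_{Q_k}$, I may assume $f\ge 0$ since $M^{loc}f=M^{loc}|f|$. For the upper bound I would use the quasi-subadditivity $\int\Psi(x,\sum_k g_k)\lesssim\sum_k\int\Psi(x,g_k)$ established above (valid because $t\mapsto\Psi(x,t)/t$ is nonincreasing) together with $M^{loc}f\le\sum_k M^{loc}f_k$ and the fact that $M^{loc}f_k$ is supported in $k+2Q$, where it is bounded by $Mf_k$. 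For the lower bound I would use that the $Q_k$ tile $\R^n$ and that $M^{loc}f\ge M^{loc}f_k$ on $Q_k$, so that $\int\Psi(x,M^{loc}f)\ge\sum_k\int_{Q_k}\Psi(x,M^{loc}f_k)$. Both directions then reduce to a two-sided control of $\int\Psi(x,Mf_k)$.

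For each $k$, the layer-cake formula together with $\Psi'(k,t)\simeq\Psi(k,t)/t$ reduces everything to the inner integral
\[
J_k(x):=\int_{\mu_k}^{2|f_k(x)|}\frac{\Psi(k,t)}{t^2}\,dt=\int_{\mu_k}^{2|f_k(x)|}\frac{dt}{t\bigl(\ln(e+t)+\ln(e+|k|)\bigr)},
\]
the upper estimate coming from the weak $(1,1)$ bound $|\{Mf_k>t\}|\lesssim t^{-1}\int_{|f_k|>t/2}|f_k|$ and the lower one from Stein's inequality \eqref{SteinIneq} in its unnormalized form $|\{M^{loc}f_k>s\}|\gtrsim s^{-1}\int_{|f_k|>s/c}|f_k|$, valid for $s\gtrsim\mu_k$. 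The crucial step, where the two logarithmic terms arise, is to evaluate $J_k$ by splitting at the natural threshold $t\simeq 1+|k|$, where $\ln(e+t)$ switches from being dominated by $\ln(e+|k|)$ to dominating it. On $[\mu_k,\min(2|f_k|,1+|k|)]$ one has $\ln(e+t)+\ln(e+|k|)\simeq\ln(e+|k|)$, which gives $\frac{1}{\ln(e+|k|)}\ln_+\frac{\min(|f_k|,|k|)}{\mu_k}$ and, after multiplying by $|f_k|$ and summing, the third term. On $[1+|k|,2|f_k|]$ one has $\ln(e+t)+\ln(e+|k|)\simeq\ln(e+t)$, and integrating $\frac{dt}{t\ln(e+t)}=d(\ln\ln(e+t))$ yields $\ln_+\frac{\ln(e+|f_k|)}{\ln(e+|k|)}$, that is, the second term.

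The base contribution $\Psi(k,\mu_k)\le\mu_k/\ln(e+|k|)$ coming from the layer-cake splitting accounts for the first sum $\sum_k\mu_k/\ln(e+|k|)$, and all lower-order pieces are absorbed into it: the additive $O(1)$ in the large regime occurs only on the set $\{2|f_k|>1+|k|\}$, where $\ln_+\!\bigl(1+\ln(e+|f_k|)/\ln(e+|k|)\bigr)\gtrsim 1$ so that $O(1)|f_k|$ is already controlled by the second term, while the constant produced by comparing $\min(2|f_k|,1+|k|)$ with $\min(|f_k|,|k|)$ (here $1+|k|\simeq|k|$ for $|k|\ge1$) integrates against $|f_k|$ to $O(\mu_k/\ln(e+|k|))$. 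This gives the stated inequality. Reading the same split with the lower bounds furnishes matching minorants for the second and third terms, and Lemma~\ref{loc-log-lem} supplies $\sum_k\mu_k/\ln(e+|k|)\lesssim\int\Psi(x,M^{loc}f)\,dx$; together these give the necessity of finiteness of the right-hand side.

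The main obstacle is the bookkeeping in the unnormalized per-cube estimate: since $\mu_k$ may now be smaller than, comparable to, or larger than both $1+|k|$ and $|f_k(x)|$, either regime in $J_k$ can be empty, and one must verify in every configuration that the leftover constants are controlled by $\mu_k/\ln(e+|k|)$ rather than by the larger quantity $\tfrac{\mu_k}{\ln(e+|k|)}\ln_+(1/\mu_k)$ that appeared under the normalization of Lemma~\ref{MaxQ}; it is precisely the $x$-dependent upper limit $\min(2|f_k|,1+|k|)$, producing the sharper third term, that effects this improvement. Matching the lower bound requires in addition that Stein's inequality be available all the way down to the scale $s\simeq\mu_k$, which is the unnormalized form of \eqref{SteinIneq}.
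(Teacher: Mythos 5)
Your proposal is correct and follows essentially the same route as the paper: the paper's own (sketched) proof also revisits the argument of Theorem \ref{bouchut-loc_log} and splits the inner integral $\int_{\mu_k}^{2|f_k(x)|}\Psi(k,s)s^{-2}\,ds$ at the threshold $s\simeq |k|$, using $\Psi(k,s)/s^2\simeq \bigl(s\ln(e+|k|)\bigr)^{-1}$ below that threshold to produce the third term and the previous computation above it to produce the second. Your additional bookkeeping (absorbing the $O(1)$ leftovers into $\mu_k/\ln(e+|k|)$ or into the second term, and invoking Lemma \ref{loc-log-lem} and the unnormalized Stein inequality for the necessity) correctly fills in details the paper leaves implicit.
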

\begin{remark} If $f$ is such that $|f_k|\lesssim \mu_k$ and $|f(x)|\lesssim |x|$ then this condition reads $$\int \frac{|f(x)|}{\ln (e+|x|)}\, dx<\infty.$$
This is the case of the example given above
$$ f(x):=(1+|x|)^{-n}\left(\ln(e+|x|)\right)^{-1}.$$
\end{remark}
\begin{proof}

We will only sketch the proof since it follows the same lines as the proof of  Theorem \ref{bouchut-loc_log}. When revisiting this proof, we cut now the integral 
$$\int_{\mu_k}^{2|f_k(x)|}\frac{\Psi (k,s)}{s^2}\,ds=\int_{\mu_k}^{\min (|k|, 2|f_k(x)|)}\frac{\Psi (k,s)}{s^2}\,ds+\chi_{\{2|f_k(x)|>|k|\}}\int_{|k|}^{2|f_k(x)|}\frac{\Psi (k,s)}{s^2}\,ds.$$
The second integral is treated as previously. For the first one, we use
$$\frac{\Psi (k,s)}{s^2}\simeq \frac 1{s\ln(e+|k|)},$$
which leads to the last term. 
\end{proof}

\section{The Hardy space $\mathcal H_{\log}$}
We finally give conditions for $\mathcal H_{\log}$ and prove the following theorem.

\begin{theorem}\label{bouchut_log} Let $T_\theta$ be defined as in \eqref{cancellation}. Let $f$ be an integrable function. Then $T_\theta f$ is a function of $\mathcal H_{\log}(\mathbb R^n)$ if  \begin{equation}\label{bouchut-cond_log}
    \int |f(x)|\left(1+\ln_+\ln(e+|f(x)|)+\ln_+\ln(e+|x|)\right)dx <\infty.
\end{equation} Moreover, if $f$ is nonnegative and $T_\theta f$ is in $\mathcal H_{\log}(\mathbb R^n),$ then \eqref{bouchut-cond_log} holds.
\end{theorem}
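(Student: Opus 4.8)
The plan is to follow the architecture of the proof of Theorem~\ref{bouchut}, systematically replacing each $L^1$ estimate by its $L_{\log}$ counterpart while keeping track of the extra weight $\ln(e+|x|)^{-1}$ carried by $\Psi(x,t)\simeq t/(\ln(e+t)+\ln(e+|x|))$. As there, I will normalize $\|f\|_1=1$, write $f=\sum_k f_k$ with $f_k=f\chi_{Q_k}$ and $\mu_k=\|f_k\|_1$, and split $T_\theta f=h+g$ with $h=\sum_k h_k$, $h_k=f_k-\left(\int f_k\right)\chi_{Q_k}$, and $g=\sum_j\lambda_j(\chi_{Q_j}-\theta)$, $\lambda_j=\int f_j$. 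Throughout I will use the quasi-subadditivity $\int_{\R^n}\Psi(x,\sum_j g_j)\,dx\lesssim\sum_j\int_{\R^n}\Psi(x,g_j)\,dx$ for nonnegative $g_j$, already recorded in the proof of Theorem~\ref{bouchut-loc_log}, together with $\mathcal M_\varphi(\sum_j g_j)\leq\sum_j\mathcal M_\varphi g_j$.

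First I would dispose of the local part. The elementary inequality $\ln_+\frac{\ln(e+|f|)}{\ln(e+|x|)}\leq\ln_+\ln(e+|f|)$, valid because $\ln(e+|x|)\geq1$, shows that \eqref{bouchut-cond_log} implies \eqref{eq-loglog}; by Theorem~\ref{bouchut-loc_log} this gives $f\in\mathfrak h_{\log}(\R^n)$. Conversely, $\mathcal M_\varphi^{loc}\leq\mathcal M_\varphi$ yields $\mathcal H_{\log}(\R^n)\subset\mathfrak h_{\log}(\R^n)$, so from $T_\theta f\in\mathcal H_{\log}$ and $\theta\in\mathfrak h_{\log}$ I would deduce $f\in\mathfrak h_{\log}$ and hence \eqref{eq-loglog}. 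Next I would prove the analogue of Proposition~\ref{close}: assuming \eqref{eq-loglog}, one has $h\in\mathcal H_{\log}(\R^n)$. Bounding $\int_{\R^n}\Psi(x,\mathcal M_\varphi h)\,dx\lesssim\sum_k\int_{\R^n}\Psi(x,\mathcal M_\varphi h_k)\,dx$, I would treat each $k$ on $k+2Q$ and on its complement. On $k+2Q$ one has $\mathcal M_\varphi h_k\leq M h_k\lesssim M f_k+\mu_k M\chi_{Q_k}$, so Lemma~\ref{MaxQ} (the characteristic-function term being harmless since $M\chi_{Q_k}\simeq1$ there) bounds the contribution by the right-hand side of \eqref{estimQ}; summing over $k$ and using $|x|\simeq|k|$ on $Q_k$ reproduces \eqref{eq-loglog} up to $\|f\|_1$. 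On $(k+2Q)^c$ the vanishing mean gives $\mathcal M_\varphi h_k(x)\lesssim\mu_k|x-k|^{-(n+1)}$, so with $\Psi(x,t)\leq t/\ln(e+|x|)$ the tail integral is $\lesssim\mu_k/\ln(e+|k|)$, which is summable since $\sum_k\mu_k=1$.

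The crux will be the analogue of Lemma~\ref{far-lem} and Proposition~\ref{far}, namely that $g\in\mathcal H_{\log}(\R^n)$ if $\sum_j|\lambda_j|\bigl(1+\ln_+\ln(e+|j|)\bigr)<\infty$, with the converse for nonnegative $\lambda_j$. For sufficiency I would reduce, by quasi-subadditivity, to estimating $\int_{\R^n}\Psi(x,\mathcal M_\varphi g_j)\,dx$ for $g_j=\lambda_j(\chi_{Q_j}-\theta)$, using exactly the pointwise maximal bounds from the proof of Lemma~\ref{far-lem}. The decisive new ingredient is the appearance of a \emph{double} logarithm,
\begin{equation*}
\int_{1<|x|<2|j|}\frac{dx}{|x|^n\ln(e+|x|)}\simeq\int_1^{2|j|}\frac{dr}{r\ln(e+r)}\simeq\ln_+\ln(e+|j|),
\end{equation*}
which yields $\int_{\R^n}\Psi(x,\mathcal M_\varphi g_j)\,dx\lesssim|\lambda_j|\bigl(1+\ln_+\ln(e+|j|)\bigr)$, and summation over $j$ concludes. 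For the necessity I would reuse the pointwise lower bound $\mathcal M_\varphi g(x)\geq(4|x|)^{-n}\sum_{|j|\geq6|x|}\lambda_j$ from the proof of Proposition~\ref{far}; since its argument is $\lesssim|x|$ for $|x|\geq\sqrt n$ one has $\Psi(x,\cdot)\gtrsim\frac{1}{|x|^n\ln(e+|x|)}\sum_{|j|\geq6|x|}\lambda_j$, and a Fubini argument again produces $\int_{\sqrt n}^{|j|/6}\frac{dr}{r\ln(e+r)}\simeq\ln_+\ln(e+|j|)$, giving $\sum_j\lambda_j\ln_+\ln(e+|j|)\lesssim\int_{\R^n}\Psi(x,\mathcal M_\varphi g)\,dx$.

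Finally I would assemble the pieces. Since $|x|\simeq|j|$ on $Q_j$ and $|\lambda_j|\leq\mu_j$, the series $\sum_j|\lambda_j|\bigl(1+\ln_+\ln(e+|j|)\bigr)$ is controlled by $\int_{\R^n}|f|\bigl(1+\ln_+\ln(e+|x|)\bigr)\,dx$; hence \eqref{bouchut-cond_log} delivers both \eqref{eq-loglog} (local part, via Proposition~\ref{close}) and the far-part series, proving sufficiency. For necessity, combining the far-part series with \eqref{eq-loglog} through the splitting $\ln_+\ln(e+|f|)\leq\ln_+\frac{\ln(e+|f|)}{\ln(e+|x|)}+\ln_+\ln(e+|x|)$ reconstructs \eqref{bouchut-cond_log}. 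The hard part will be the far-part estimates above: extracting the correct double-logarithmic order from the $\ln(e+|x|)^{-1}$-weighted integrals of the maximal tails, carrying out the matching lower bound for the necessity, and handling the bookkeeping forced by the quasi-Banach nature of $L_{\log}$ and by the normalization.
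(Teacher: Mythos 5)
Your proposal is correct and follows essentially the same route as the paper: the same splitting $T_\theta f=h+g$, Lemma~\ref{MaxQ} for the near part, the weighted integral $\int_{1<|x|<2|j|}|x|^{-n}\ln(e+|x|)^{-1}dx\simeq\ln_+\ln(e+|j|)$ for the far part, and the reduction of necessity through $f\in\mathfrak h_{\log}(\R^n)$ and Theorem~\ref{bouchut-loc_log}. The only difference is presentational: the paper packages your far-part estimate as Proposition~\ref{far2} for a general radial nonincreasing weight $\omega$ (and, for the necessity, uses $|g|\lesssim 1$ to reduce to the weighted space $\mathcal H^1_\omega$ with $\omega=\ln(e+|\cdot|)^{-1}$, which is equivalent to your direct monotonicity argument on $\Psi$).
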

As for $\mathcal H^1(\R^n)$, a function $f$ of integral $0$ and satisfying the condition \eqref{bouchut-cond_log} belongs to $\mathcal H_{\log}(\R^n)$.
\begin{proof}
 We write as before $T_\theta f=g+h.$ To prove that the function $h$ belongs to $\mathcal H_{\log}(\mathbb R^n)$, we do as in the proof of Theorem \ref{bouchut} and look first at the $L_{\log}$ norm of each $Mh_k$ on $k+2Q$. On this set, we consider the two terms in $h_k$ separately. We conclude directly for the characteristic function whose sum gives a $L^1$ term, while, for $f_k$ we use the estimate \eqref{estimQ}. It remains to look at the integral of $Mh_k$ outside $k+2Q$. We use the fact that $\Psi(x, t)\leq t,$ and may consider $L^1$ norms as in Proposition \ref{close}. The proof is identical. 
 
 We finally want to estimate $g.$ In this case, Proposition \ref{far} is replaced by the following one, which may be seen as its generalization.

\begin{proposition}\label{far2} Let $\omega$ be a radial nonincreasing positive function  on $\R^n,$  which satisfies
\begin{equation}
    \int_{\R^n} \frac{\omega(x)}{(1+|x|)^{n+1}}dx <\infty.
\end{equation}
Let $(\lambda_j)_{j\in \Z^n}\in \ell^1(\Z^n)$ and let  $g:=\sum g_j,$  with $g_j:=\lambda_j(\chi_{Q_j}-\theta).$ Then $g$ is in $\mathcal H^1_{\omega}(\R^n)$ if \begin{equation}\label{for-far2}
\sum_j\lambda_j\Omega(|j|)<\infty    
\end{equation}
where $\Omega$ is the function 
$$\Omega(R):= \omega(Q_0)+ \int_{1<|y|<R}\frac{\omega(y)}{|y|^n}\, dy.$$
Moreover, if the $\lambda_j'$s are nonnegative, it is a necessary condition for having $g$ in $\mathcal H^1_{\omega}(\R^n).$
\end{proposition}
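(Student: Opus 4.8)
The plan is to transpose to the weighted setting the two ingredients used for Proposition \ref{far}: a weighted substitute for Lemma \ref{far-lem} applied to the building blocks $\chi_{Q_j}-\theta$, summed up for the sufficiency, and the pointwise minoration of $\mathcal M_\varphi g$ for the necessity. For the sufficiency I would first prove that whenever $a$ is bounded by $1$, has mean $0$ and is supported in $Q_0\cup Q_j$ with $|j|>2$, then $\int_{\R^n}\mathcal M_\varphi a\,\omega\,dx\lesssim \Omega(|j|)$. The pointwise bounds on $\mathcal M_\varphi a$ are exactly those of Lemma \ref{far-lem}, namely $\mathcal M_\varphi a(x)\lesssim |j|\,|x|^{-n-1}$ for $|x|>2|j|$ and $\mathcal M_\varphi a(x)\lesssim (1+|x|^n)^{-1}+(1+|x-j|^n)^{-1}$ for $|x|\le 2|j|$; the only new point is to integrate these against the radial nonincreasing weight $\omega$.

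Passing to polar coordinates, each resulting one-dimensional integral is compared with $\Omega(|j|)=\omega(Q_0)+\int_{1<|y|<|j|}\omega(y)|y|^{-n}\,dy$ by using only the monotonicity of $\omega$. The tail $|j|\int_{2|j|}^\infty \omega(r)r^{-2}\,dr$ is at most $\tfrac12\omega(2|j|)$, and $\omega(2|j|)\le\omega(|j|)\le (\ln|j|)^{-1}\int_1^{|j|}\omega(r)r^{-1}\,dr\lesssim \Omega(|j|)$; the annulus $1<|x|<2|j|$ yields $\int_1^{2|j|}\omega(r)r^{-1}\,dr$, whose excess over $\int_1^{|j|}$ is again $\lesssim\omega(|j|)$; and near $x=j$ one splits at $|x-j|=|j|/2$, the inner part being $\lesssim \omega(|j|/2)(1+\ln|j|)\lesssim\Omega(|j|)$ and the outer part $\lesssim |j|^{-n}\int_{|x|\le 2|j|}\omega\lesssim \omega(0)$. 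Granting this, subadditivity of the maximal function gives $\int_{\R^n}\mathcal M_\varphi g\,\omega\,dx\le\sum_j|\lambda_j|\int_{\R^n}\mathcal M_\varphi(\chi_{Q_j}-\theta)\,\omega\,dx\lesssim\sum_j|\lambda_j|\Omega(|j|)$, the finitely many indices with $|j|\le 2$ being handled directly since each $\chi_{Q_j}-\theta$ is then a fixed multiple of an atom and $\int\mathcal M_\varphi(\chi_{Q_j}-\theta)\,\omega<\infty$ by the integrability hypothesis on $\omega$.

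For the necessity, with $\lambda_j\ge 0$, I would reuse verbatim the lower bound from the proof of Proposition \ref{far}: with $0\le\varphi\le1$ and $\varphi\equiv1$ on $B(0,1/2)$, one has $\mathcal M_\varphi g(x)\ge\frac1{(4|x|)^n}\sum_{|j|\ge 6|x|}\lambda_j$ for $|x|>\sqrt n$. Multiplying by $\omega$, integrating over $\{|x|>\sqrt n\}$ and using Tonelli yields $\int_{\R^n}\mathcal M_\varphi g\,\omega\,dx\gtrsim\sum_{|j|>6\sqrt n}\lambda_j\int_{\sqrt n<|x|<|j|/6}\omega(x)|x|^{-n}\,dx\simeq\sum_{|j|>6\sqrt n}\lambda_j\int_{\sqrt n}^{|j|/6}\omega(r)r^{-1}\,dr$. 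It then suffices to check that $\int_{\sqrt n}^{|j|/6}\omega(r)r^{-1}\,dr\gtrsim \Omega(|j|)$ for $|j|$ large: splitting $\int_1^{|j|}=\int_1^{\sqrt n}+\int_{\sqrt n}^{|j|/6}+\int_{|j|/6}^{|j|}$, both boundary integrals and the constant $\omega(Q_0)$ are absorbed into $\int_{\sqrt n}^{|j|/6}$ through the monotonicity of $\omega$ and the elementary bound $\int_{\sqrt n}^{|j|/6}\omega(r)r^{-1}\,dr\ge \omega(|j|/6)\ln(|j|/6\sqrt n)$. The remaining indices with $6\sqrt n<|j|$ bounded form a finite set, as does $\{|j|\le 6\sqrt n\}$, so their contribution is automatically finite.

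The step I expect to be delicate is this final comparison between the truncated weighted integral $\int_{\sqrt n}^{|j|/6}\omega(r)r^{-1}\,dr$ and $\Omega(|j|)$, that is, showing that narrowing the annulus of integration and discarding the near-origin mass $\omega(Q_0)$ cost only a uniform constant for large $|j|$; this is exactly where the hypotheses that $\omega$ be radial and nonincreasing, together with $\int\omega(x)(1+|x|)^{-n-1}\,dx<\infty$, are used. The same monotonicity governs the absorption of all the boundary annuli in the sufficiency part, so the whole argument rests on it.
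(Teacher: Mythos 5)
Your argument is the same as the paper's: for the sufficiency you integrate the pointwise bounds of Lemma \ref{far-lem} against $\omega$ and control each term by $\Omega(|j|)$ using radial monotonicity (the paper handles the term near $x=j$ in one stroke via the rearrangement inequality $\int_{1<|x-j|<3|j|}\omega(x)|x-j|^{-n}dx\le\int_{1<|x|<3|j|}\omega(x)|x|^{-n}dx$ and bounds the tail by a constant after the substitution $x\mapsto |j|x$, but your splittings give the same estimates), and for the necessity you reuse the positivity argument of Proposition \ref{far}. One small repair in the sufficiency: the outer part of your near-$j$ integral should not be bounded by $\omega(0)$, which may be infinite under the stated hypotheses ($\omega$ is only locally integrable a priori); use instead $|j|^{-n}\int_{|x|<4|j|}\omega\lesssim |j|^{-n}\int_{|x|<1}\omega+\omega(1)\lesssim 1\lesssim\Omega(|j|)$.

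The step that does not go through as written is the final comparison in the necessity, $\int_{\sqrt n}^{|j|/6}\omega(r)r^{-1}\,dr\gtrsim\Omega(|j|)$. Monotonicity of $\omega$ runs in the wrong direction for absorbing $\omega(Q_0)$ and $\int_1^{\sqrt n}\omega(r)r^{-1}dr$ into $\int_{\sqrt n}^{|j|/6}$: if $\omega$ vanishes (or is very small) outside a neighborhood of the origin, the left-hand side is $0$ (or tiny) while $\Omega(|j|)\ge\omega(Q_0)>0$, so the inequality fails and your lower bound loses the term $\omega(Q_0)\sum_j\lambda_j$ of the necessary condition. That term cannot be extracted from the region $|x|>\sqrt n$ at all; it must come from a lower bound for $\mathcal M_\varphi g$ on $Q_0$ itself, obtained by the same positivity argument at a fixed scale: for $x\in Q_0$ and $t=4\sqrt n$ one has $\varphi_t*\theta(x)=t^{-n}$ and $\varphi_t*\chi_{Q_j}(x)=0$ for $|j|$ larger than a dimensional constant, whence $-\varphi_t*g(x)\ge t^{-n}\sum_{|j|>C_n}\lambda_j$ and $\int_{Q_0}\mathcal M_\varphi g\,\omega\gtrsim\omega(Q_0)\sum_{|j|>C_n}\lambda_j$. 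This is exactly why the paper's stated lower bound for $\int(\mathcal M_\varphi a_j)\omega$ keeps $\omega(Q_0)$ as a separate term. For the weight $\omega=\ln(e+|\cdot|)^{-1}$ actually used in Theorem \ref{bouchut_log}, $\int_{\sqrt n}^{|j|/6}\omega(r)r^{-1}dr\gtrsim 1$ for large $|j|$ and your shortcut is harmless, so the gap concerns only the proposition in its stated generality.
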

\begin{proof}
We first note that the function $\Omega$ is doubling, that is, there exists $C$ such that $\Omega (2R)\leq C\Omega(R).$ Next we adapt Lemma \ref{far-lem}. Inequality \eqref{maj-weight} for $a_j=\chi_{Q_j}-\theta$ is replaced by
\begin{align}\nonumber
    \int |\mathcal M_\varphi a_j| \omega(x) dx\leq \omega(Q_0)&+\omega(Q_j)+\int_{|x|>2|j|} \frac {C|j|\omega(x)}{|x|^{n+1}}\,dx\\&+\int_{1<|x|<2|j|} \frac {C\omega(x)}{|x|^{n}}\,dx +\int_{1<|x-j|<3|j|} \frac {C\omega(x)}{|x-j|^{n}}\,dx.\label{maj-weight2}
\end{align}
From the facts that $\omega$ is radial and nonincreasing, it follows that $\omega (Q_0)\geq \omega (Q_j) $ and 
$$ \int_{1<|x-j|<3|j|} \frac {\omega(x)}{|x-j|^{n}}\,dx\leq \int_{1<|x|<3|j|} \frac {\omega(x)}{|x|^{n}}\,dx\lesssim\Omega (|j|).$$
It remains to consider the term $$\int_{|x|>2|j|} \frac {|j|\omega(x)}{|x|^{n+1}}\,dx=\int_{|x|>2} \frac {\omega(|j| x)}{|x|^{n+1}}\,dx\lesssim 1.$$
This gives the sufficient condition. 

The necessary condition is obtained as in the proof of Proposition \ref{far}. Indeed, one has as before, for $|x|\ge \sqrt n$,
$$|\mathcal M_\varphi g(x)|\geq \frac{1}{(4|x|)^n}\sum_{j\geq 6|x|}\lambda_j. $$
 Integrating against $\omega$ over the set $ |x|\ge \sqrt n$, we get
 $$\sum_j \lambda_j \int_{1< |x|<| j|}\frac{\omega(x)}{|x|^n}dx<\infty.$$ It gives the necessary condition. 

\end{proof}

In particular, when $\omega(x)=\ln( e+|x|)^{-1},$ we have $\Omega(x)\simeq 1+\ln(\ln (e+|x|),$ which allows to conclude for the sufficient condition in Theorem \ref{bouchut_log}. 

It remains to prove the necessary condition in Theorem \ref{bouchut_log}. Let us assume that $f$ is a nonnegative function such that $T_\theta f$ belongs to $\mathcal H_{\log}(\R^n)$. Then  $T_\theta f\in \mathfrak h_{\log}(\mathbb R^n)$, and as $\theta\in \mathfrak h^{1}(\mathbb R^n)\subset \mathfrak h_{\log}(\mathbb R^n)$, we conclude that $f$ itself belongs to $\mathfrak h_{\log}(\mathbb R^n)$. Hence by Theorem \ref{bouchut-loc_log}, $f$ satisfies (\ref{eq-loglog}). If we cut $f$ into $g+h$ as before, Lemma \ref{MaxQ} implies that the function $h$ is in $\mathcal H_{\log}(\R^n)$ (apply estimate \eqref{estimQ} to each $h_k$). Hence, $g$ is also in $\mathcal H_{\log}(\R^n).$ To conclude, we claim that we can work on weighted inequalities since, as $|g|\leq 2\|f\|_1=2,$ we have
$$ \int_{\R^n}\Psi(x, \mathcal M_\varphi g)\, dx\simeq\int_{\R^n} \mathcal M_\varphi g(x)\; \frac{dx}{\ln(e+|x|)}.$$
Hence, $g$ belongs to $\mathcal H^1_\omega(\R^n)$ where 
$\omega (x)= \ln(e+|x|)^{-1}.$ Then, it suffices to apply by Proposition 
 \ref{far2}, with $\displaystyle\lambda_k=\int f_k\,dx$ to get $$\sum_k \lambda_k (1+\ln(\ln (e+|k|))<\infty$$ which is equivalent to 
 
$$\int f(x)(1+\ln\ln(e+|x|))dx<\infty.$$ 
The conclusion follows.
\end{proof}

\section{Concluding remarks} 

For the  estimates in $\mathfrak h_{\log}(\R^n)$ we have allowed $f$ not to be integrable. This could also be done for the space $\mathcal H_{\log}(\R^n),$ even if only $\displaystyle\left(\int f_k\, dx\right)\theta$ makes sense for $f$ locally integrable. Other possible generalizations concern the weighted spaces. Part of this study can be generalized to other Hardy spaces of Musielak type. But formulas seem to be much more complicated and have less interest.  
\bibliographystyle{plain}

\end{document}